\newtheorem{theorem}{Theorem}[section]
\newtheorem{lemma}[theorem]{Lemma}
\newtheorem{corollary}[theorem]{Corollary}
\theoremstyle{definition}
\theoremstyle{remark}
\numberwithin{equation}{section}
\begin{document}
\setcounter{page}{1}


\title[ADDITIVITY OF MULTIPLICATIVE (GENERALIZED) MAPS OVER RINGS]{ADDITIVITY OF MULTIPLICATIVE (GENERALIZED) MAPS OVER RINGS}
\author[Sk Aziz,  Arindam Ghosh and Om Prakash]{Sk Aziz,  Arindam Ghosh and Om Prakash$^{*}$}
\thanks{{\scriptsize
\hskip -0.4 true cm MSC(2020): 16N60, 16W25, 16W55.
\newline Keywords: 	Prime ring; derivation; skew structure; generalized derivation.\\
$*$Corresponding author}}

	\begin{abstract}
	In this paper, we mainly prove some results on the additivity of maps over rings under certain conditions. First, we discuss a special cases of MARTINDALE III's theorem of \cite{1969M} as a bijective map $\varphi$ over a ring $R$ with a non-trivial idempotent satisfying $\varphi(ab)=\varphi(a)\varphi(b)$ for all $a, b\in R$, is additive. Then we prove that a map $D$ on $R$ satisfying $D(ab)=D(a)b+\varphi(a) D(b)$ for all $a,b\in R$, where $\varphi$ is the map mentioned above, is additive. Finally, we establish that if a map $g$ over $R$ satisfies $g(ab)=g(a)b+\varphi(a)D(b),$ for all $a,b\in R$ and the maps $\varphi$ and $D$ are mentioned above, then $g$ is additive.	
	
\end{abstract}

\maketitle

\section{Introduction}
Recall that a ring $R$ is a prime ring if $aRb=0$ for some $a, b\in R$ implies $a=0$ or $b=0$. Similarly, a ring $R$ is called semi-prime if $aRa=0$ for some $a\in R$ implies that $a=0$. Let $n$ be a positive integer. A ring $R$ is called $n$-torsion free if $na=0$ for some $a\in R$ implies that $a=0$. Finally, let $R=(R,+,\cdot)$ be a ring. The opposite ring of $R$, denoted by $R^{\circ}=(R,+,\star)$, is the same as $R$ except that the multiplication $\star$ is defined by $a\star b=b \cdot a$, for all $a, b\in R$.

Throughout this paper, $R$ is an associative ring with the identity element $1$ and a non-trivial idempotent element $e$. Let $e_1=e$ and $e_2=1-e_1$. Then $R=R_{11} \oplus R_{12} \oplus R_{21} \oplus R_{22}$, where $R_{ij}=e_i R e_j$ and $1\leq i, j \leq 2$. A map $f: R \rightarrow R$ is said to be additive if it satisfies $f(a+b)=f(a)+f(b)$ for all $a,b \in R$. An additive bijective map $\varphi: R \rightarrow R$ is called an automorphism if $\varphi(ab)=\varphi(a)\varphi( b)$ for all $a,b \in R$, and an anti-automorphism if $\varphi(ab)=\varphi(b)\varphi( a)$ for all $a,b \in R$. Without assuming the additivity condition of $\varphi$, the map is known as a multiplicative automorphism and multiplicative anti-automorphism, respectively. An additive map $d: R \rightarrow R$ is said to be a derivation if it satisfies $d(ab)=d(a) b+a d(b)$ for all $a,b\in R$.

In 1957, Herstein proved in \cite{herstein1957jordan} that any Jordan derivation (a generalization of ordinary derivation) over a prime ring becomes an ordinary derivation with some torsion restriction of the ring. Let $\varphi$ be an automorphism of $R$. An additive map $D: R \rightarrow R$ is said to be a skew derivation if	$D(ab)=D(a) b+\varphi(a) D(b)$,
for all $a,b \in R$. It is also known as a $\varphi$-derivation. For example, we consider the identity homomorphism $1_R$ on $R$. Then the map $\varphi-1_R$ is an example of a skew derivation. Similarly, we can define a generalized skew derivation. An additive map $g: R \rightarrow R$ is said to be a generalized skew derivation if $g(ab)=g(a)b+\varphi(a) D(b),$
for all $a, b \in R$, where $\varphi$ is an automorphism of $R$ and $D$ is a skew derivation associated with $g$. Thus, we have the above two derivations as a generalization of derivation and automorphism.

Researchers have studied skew derivations in ring theory and the theory of operator algebras. If we remove the condition of additivity, then it is called a multiplicative derivation. Similarly, $D$ and $g$ are called multiplicative skew derivations and multiplicative generalized skew derivations, respectively, without assuming the additivity of the maps. It is natural to ask when are some multiplicative maps additive.

In 1948  \cite{rickart1948one}, Rickart first raised this question. He showed that a bijective and multiplicative onto map $h: B \rightarrow R^{\prime}$ is additive, where $B$ is a Boolean ring and $R^{\prime}$ is any ring. He also proved that any bijective multiplicative mapping from a ring $R$ onto a ring $S$ is additive, where $R$ contains a family of minimal ideals satisfying certain conditions.

In 1958 \cite{johnson1958rings}, Johnson extended Rickart's result to a larger class of rings. In 1969  \cite{martindale1969multiplicative}, Martindale proved that every multiplicative isomorphism from a ring $R$ onto a ring $R^{\prime}$ is additive. In 1991, using Martindale's conditions,  Daif \cite{daif1991multiplicative} proved that any multiplicative derivation on a ring $R$ is additive.

In 2009 \cite{wang2009additivity}, Wang proved that any multiplicative isomorphism from a ring $R$ onto a ring $S$ is additive, where $R$ contains a family of idempotents satisfying certain conditions. In 2012 \cite{jing2012additivity}, Jing and Lu proved that every multiplicative Jordan derivation and Jordan triple derivation over a ring $R$ with a non-trivial idempotent satisfying certain conditions is additive.

In 2014 \cite{ferreira2014multiplicative}, Ferreira proved that every $m$-multiplicative isomorphism from a triangular $n$-matrix ring onto another ring is additive, and every multiplicative $m$-derivation over any triangular $n$-matrix ring is additive, where the triangular $n$-matrix ring satisfies certain conditions in both cases. In 2015 \cite{ferreira2015jordan}, Ferreira also proved that every multiplicative Jordan derivation on a triangular ring with certain conditions is additive.

In 2017 \cite{yadav2017additivity}, Yadav and Sharma proved that any multiplicative generalized Jordan derivation on a ring $R$ with a non-trivial idempotent is additive. For more results, see \cite{a23,a24}. This result raises the question: "When are multiplicative automorphisms, multiplicative skew derivations, and multiplicative generalized skew derivations additive?" In this paper, we find an affirmative answer to this question. Also, in the case of a multiplicative skew derivation $D$, we consider $\varphi$ as a multiplicative automorphism. Similarly, for a multiplicative generalized skew derivation $g$, we think of $D$ as a multiplicative skew derivation.
\\

Let $R$ be a ring satisfying the following condition.
\begin{equation}
	\tag{A}
	\text{If} ~a_{i j} x_{j k}=0~\text{ for all} ~x_{j k} \in R_{j k}~ \text{and for some }~p_{ij}\in R_{ij}, ~\text{then}~ a_{i j}=0.
	\label{eqA}
\end{equation}
In the following section, we assume that $R$ satisfies condition \eqref{eqA} without explicitly mentioning it.

In this paper, we investigate the relationship between the multiplicative and additive structures of a ring $R$ with respect to certain classes of endomorphisms. Specifically, we consider the automorphisms, skew derivations, and generalized skew derivations on $R$ and study their interplay with the additive structure of $R$.

In Section 2, we prove that the multiplicative automorphisms on $R$ are necessarily additive. It provides a deep insight into the algebraic structure of $R$ and sheds light on the relationship between its additive and multiplicative operations. Section 3 turns our attention to skew derivations on $R$. Skew derivations are linear maps on $R$ that satisfy a particular multiplicative property. We prove that every multiplicative skew derivation on $R$ is necessarily additive and provide a new perspective on the structure of skew derivations and their relationship with the additive structure of $R$.
Section 4 extends our analysis to the more general case of multiplicative generalized skew derivations on $R$. These are maps that satisfy a weaker multiplicative condition than skew derivations. We prove that every multiplicative generalized skew derivation on $R$ is necessarily additive. Our results in this section deepen our understanding of the relationship between the additive and multiplicative structures of $R$ in the context of generalized skew derivations.

Overall, our results in this paper shed light on the interplay between the additive and multiplicative structures of a ring and provide new insights into the algebraic structure of the rings and their endomorphisms. Our analysis of automorphisms, skew derivations, and generalized skew derivations on $R$ highlights the crucial role played by the additive structure in understanding the multiplicative structure of a ring.

\section{Multiplicative Automorphism}

\begin{theorem}
	\label{thm2.1}
	If $\varphi$ is a multiplicative automorphism on $R$, then $\varphi$ is additive. Moreover, $\varphi$ is an automorphism on $R$.
\end{theorem}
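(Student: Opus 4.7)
The plan is to exploit the bijectivity of $\varphi$ together with the Peirce decomposition $R = R_{11} \oplus R_{12} \oplus R_{21} \oplus R_{22}$. The general strategy, classical since Martindale, is: to prove $\varphi(a+b) = \varphi(a) + \varphi(b)$, I first invoke surjectivity to produce $t \in R$ with $\varphi(t) = \varphi(a) + \varphi(b)$, decompose $t = \sum_{i,j} t_{ij}$ with $t_{ij} \in R_{ij}$, and then pin down each $t_{ij}$ by multiplying $t$ by $e_1$ and $e_2$ (from both sides) and by generic Peirce components, using multiplicativity to translate the identity $\varphi(t) = \varphi(a) + \varphi(b)$ into identities about products. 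Injectivity then lets me conclude $t = a+b$.

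First I would record the trivial facts: $\varphi(0) = 0$ (from $\varphi(0) = \varphi(0 \cdot 0) = \varphi(0)^2$ combined with bijectivity, picking $c$ with $\varphi(c)=0$ and using $\varphi(0) = \varphi(0\cdot c) = \varphi(0)\varphi(c) = 0$). Next I would handle the easy \emph{mixed} Peirce cases: additivity of $\varphi$ on $R_{ii} + R_{ij}$ ($i\neq j$), on $R_{11} + R_{22}$, and on $R_{ij} + R_{ij}$. In each case, for the candidate $t$, multiplying by $e_1$ or $e_2$ on the appropriate side annihilates one of the summands via the orthogonality relations $e_i R_{kl} e_j = 0$ unless $k=i,\ l=j$, and the surviving relation $\varphi(e_s t e_r) = \varphi(e_s a e_r) + \varphi(e_s b e_r)$ forces equality of the corresponding Peirce components of $t$ and of $a+b$ directly, by injectivity.

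The main obstacle is the \emph{diagonal} case $\varphi(a_{ii} + b_{ii}) = \varphi(a_{ii}) + \varphi(b_{ii})$, because multiplying by $e_1, e_2$ on either side only tells me that $t_{12}, t_{21}, t_{jj} = 0$ (for $j \neq i$); it does not separate $t_{ii}$ from $a_{ii}+b_{ii}$. To overcome this, I would compute $\varphi(t x_{ij})$ for an arbitrary $x_{ij} \in R_{ij}$ (with $j\neq i$): multiplicativity gives
\begin{equation*}
\varphi(t x_{ij}) = \varphi(t)\varphi(x_{ij}) = \varphi(a_{ii})\varphi(x_{ij}) + \varphi(b_{ii})\varphi(x_{ij}) = \varphi(a_{ii}x_{ij}) + \varphi(b_{ii}x_{ij}),
\end{equation*}
and then I invoke the already-established mixed additivity on $R_{ij}+R_{ij}$ to rewrite the right-hand side as $\varphi(a_{ii}x_{ij} + b_{ii}x_{ij})$. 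Injectivity of $\varphi$ gives $(t_{ii} - a_{ii} - b_{ii})x_{ij} = 0$ for all $x_{ij} \in R_{ij}$, whereupon condition (\ref{eqA}) forces $t_{ii} = a_{ii}+b_{ii}$.

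With additivity established on each Peirce component and across all mixed pairs, full additivity $\varphi(a+b) = \varphi(a)+\varphi(b)$ for arbitrary $a,b \in R$ follows by decomposing $a = \sum a_{ij}$, $b = \sum b_{ij}$ and iterating the partial results. The \emph{Moreover} clause is then immediate: $\varphi$ is additive, multiplicative, and bijective, hence an automorphism of $R$ in the usual sense.
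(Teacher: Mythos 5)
There is a genuine gap, and it sits exactly where the real difficulty of the theorem lies: the case $a_{ij},b_{ij}\in R_{ij}$ with $i\neq j$, i.e.\ $\varphi(a_{12}+b_{12})=\varphi(a_{12})+\varphi(b_{12})$. You classify this among the ``easy mixed cases'' to be dispatched by sandwiching the candidate $t$ between idempotents, but no annihilation occurs there: \emph{both} summands lie in the same Peirce corner, so computing $\varphi(e_1te_2)=\varphi(e_1)\varphi(t)\varphi(e_2)$ just returns $\varphi(a_{12})+\varphi(b_{12})$, which is precisely the expression you are trying to identify with a single value of $\varphi$ --- the right-hand side is not of the form $\varphi(\text{one element})$, so injectivity gives you nothing and the case is not resolved. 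Your sandwich argument is perfectly sound (indeed arguably cleaner than the paper's one-sided test-element computations, since it avoids condition (A)) for the genuinely mixed cases $R_{ii}+R_{ij}$, $R_{11}+R_{22}$ and for the four-corner splitting, because there at most one summand survives in each corner; it simply cannot separate two summands sharing a corner. And the gap is load-bearing: your diagonal argument explicitly invokes ``the already-established mixed additivity on $R_{ij}+R_{ij}$'' to rewrite $\varphi(a_{ii}x_{ij})+\varphi(b_{ii}x_{ij})$ as $\varphi(a_{ii}x_{ij}+b_{ii}x_{ij})$, so the diagonal case, and with it the full theorem, collapses along with it.

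The missing idea is the Martindale-style factorization that the paper deploys in its Lemma \ref{lem2.6}: a sum of two products out of $R_{12}$ is a \emph{single} product,
\[
a_{12}x_{22}+b_{12}x_{22}=(e_1+b_{12})(x_{22}+a_{12}x_{22}),
\]
using $e_1x_{22}=0$ and $b_{12}a_{12}x_{22}=0$. Then, for $t$ with $\varphi(t)=\varphi(a_{12})+\varphi(b_{12})$ and any $x_{22}\in R_{22}$, one has $\varphi(tx_{22})=\varphi(a_{12}x_{22})+\varphi(b_{12}x_{22})=[\varphi(e_1)+\varphi(b_{12})][\varphi(x_{22})+\varphi(a_{12}x_{22})]$ (after inserting the zero terms $\varphi(e_1x_{22})$ and $\varphi(b_{12}a_{12}x_{22})$), and the \emph{mixed} additivity you have already established converts the two bracketed sums into $\varphi(e_1+b_{12})\,\varphi(x_{22}+a_{12}x_{22})=\varphi(a_{12}x_{22}+b_{12}x_{22})$. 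Injectivity now gives $[t-(a_{12}+b_{12})]x_{22}=0$ for all $x_{22}$, and condition (A) kills the $R_{12}$ and $R_{22}$ components of the difference; a second pass with $x_{11}\in R_{11}$ (where both products are $0$) kills the rest, so $t=a_{12}+b_{12}$. With this lemma inserted between your mixed cases and your diagonal case, the rest of your outline goes through in the order you propose, and the final assembly indeed needs the four-corner lemma, which your sandwich trick does deliver directly.
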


\begin{proof}
	Since, $R$ is an associative ring with the identity element $1$ and a non-trivial idempotent element $e$, and satisfies the following
	condition \eqref{eqA}.
	Then, $R$ satisfies conditions $(1), (2)$ and $(3)$ of MARTINDALE III's Theorem \cite{1969M}. Hence, any
	multiplicative isomorphism $\varphi$ of $R$ onto an arbitrary ring $S$ is additive. Thus, this is a special care of that result.
\end{proof}

With the motivation of the Corollaries given in \cite{1969M}, we have the following results (corollaries).
\begin{corollary}
	\label{cor2.9}
	If $\varphi$ is a multiplicative automorphism on a prime ring $R$ with a non-trivial idempotent $e$, then $\varphi$ is additive. Moreover, $\varphi$ is an automorphism on $R$.
\end{corollary}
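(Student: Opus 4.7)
The plan is to reduce the corollary to Theorem \ref{thm2.1} by verifying that every prime ring with a non-trivial idempotent automatically satisfies the structural assumption \eqref{eqA}. Once this is in place, the additivity of $\varphi$ follows verbatim from Theorem \ref{thm2.1}, and bijectivity combined with multiplicativity and additivity upgrades $\varphi$ to a genuine ring automorphism.

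To verify \eqref{eqA}, I would fix $a_{ij}\in R_{ij}$ with $a_{ij}x_{jk}=0$ for every $x_{jk}\in R_{jk}$. Writing $x_{jk}=e_j r e_k$ for arbitrary $r\in R$ and using $a_{ij}e_j=a_{ij}$ (since $a_{ij}\in e_iRe_j$), the hypothesis becomes $a_{ij}\,R\,e_k=0$. Primeness of $R$ then forces either $a_{ij}=0$ or $e_k=0$. Because $e$ is a non-trivial idempotent, both $e_1=e$ and $e_2=1-e$ are nonzero, so $e_k\neq 0$ and therefore $a_{ij}=0$. This is exactly condition \eqref{eqA}.

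With \eqref{eqA} established, Theorem \ref{thm2.1} applies directly and yields that $\varphi$ is additive. The hypothesis that $\varphi$ is a multiplicative automorphism already provides bijectivity and $\varphi(ab)=\varphi(a)\varphi(b)$ for all $a,b\in R$; combining this with the newly-obtained additivity shows that $\varphi$ is an automorphism of $R$ in the usual ring-theoretic sense.

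There is no real obstacle: the only content beyond invoking Theorem \ref{thm2.1} is the short primeness argument, and the one point worth underlining is that the non-triviality of $e$ is what prevents the degenerate case $e_k=0$ from defeating the appeal to primeness. The same template would give analogous corollaries for semi-prime rings, provided one argues instead with $e_k R a_{ij} R e_k=0$ or a similar symmetrization to invoke the semi-prime hypothesis.
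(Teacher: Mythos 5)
Your proposal is correct and follows essentially the same route as the paper: verify condition \eqref{eqA} from primeness (writing $x_{jk}=e_jre_k$ so the hypothesis $a_{ij}x_{jk}=0$ becomes $a_{ij}Re_k=0$), then invoke Theorem \ref{thm2.1}. You additionally make explicit a point the paper leaves implicit, namely that the non-triviality of $e$ guarantees $e_k\neq 0$, which is needed for primeness to force $a_{ij}=0$.
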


\begin{proof}
	Let $e_1=e$ and $e_2=1-e_1$. Then $R_{ij}=e_i R e_j$. Then
	\begin{align*}
		& p_{ij}x_{jk}=0,~\text{for some}~p_{ij}\in R_{ij}~\text{and for all}~x_{jk}\in R_{jk}\\
		\implies & e_i p e_j e_j x e_k=0, ~\text{for some}~p\in R~\text{and for all}~x\in R\\
		\implies & (e_i p e_j) x (e_k)=0, ~\text{and for all}~x\in R\\
		\implies & e_i p e_j=0 ~\text{(Since R is a prime ring)}\\
		\implies & p_{ij}=0.
	\end{align*}
	Thus, $R$ satisfies condition \eqref{eqA}. Hence, we get the desired result by Theorem \ref{thm2.1}.
\end{proof}

\begin{corollary}
	\label{cor2.10}
	If $\varphi$ is a multiplicative anti-automorphism on a prime ring $R$ with a non-trivial idempotent $e$, then $\varphi$ is additive. Moreover, $\varphi$ is an anti-automorphism on $R$.
\end{corollary}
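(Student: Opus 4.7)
The plan is to mirror the proof of Corollary 2.9 with the order of multiplication reversed throughout. Lemmas 2.2 and 2.3 transfer unchanged: $\varphi(0 \cdot x) = \varphi(x)\varphi(0) = 0$ still forces $\varphi(0) = 0$, and the two-sided check $\varphi(1)\varphi(x) = \varphi(x \cdot 1) = \varphi(x)$ together with $\varphi(x)\varphi(1) = \varphi(1 \cdot x) = \varphi(x)$ still forces $\varphi(1) = 1$. For the anti-automorphic analogue of Lemma 2.4, I would use $\varphi(e_i z) = \varphi(z)\varphi(e_i)$ in place of $\varphi(z e_i) = \varphi(z)\varphi(e_i)$; expanding with $\varphi(z) = \varphi(e_1) + \varphi(e_2)$ and invoking $\varphi(e_j)\varphi(e_i) = \varphi(e_i e_j)$ still yields $e_i z = e_i$ for $i = 1, 2$, and summing gives $z = 1$, hence $\varphi(e_1) + \varphi(e_2) = 1$.

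For Lemmas 2.5--2.8, the test element $x_{jk}$ would be moved from the right side of $z$ to the left side (or vice versa), so that each expansion $\varphi(x_{jk} z) = \varphi(z)\varphi(x_{jk})$ separates the appropriate Pierce components of $z$. Condition \eqref{eqA} still delivers the required vanishing on a prime ring, because its opposite-side version ``if $x_{ij} a_{jk} = 0$ for all $x_{ij} \in R_{ij}$, then $a_{jk} = 0$'' is derived in exactly the same way as in the proof of Corollary 2.9 (using $e_i R (e_j a e_k) = 0$ and $e_i \neq 0$). Assembling these anti-automorphic analogues as in the proof of Theorem \ref{thm2.1} then yields $\varphi(a + b) = \varphi(a) + \varphi(b)$.

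A conceptually cleaner alternative is to pass to the opposite ring $R^{\circ} = (R, +, \star)$ with $a \star b = b a$. The map $\tilde{\varphi}: R \to R^{\circ}$ defined by $\tilde{\varphi}(x) = \varphi(x)$ satisfies $\tilde{\varphi}(ab) = \varphi(b)\varphi(a) = \tilde{\varphi}(a) \star \tilde{\varphi}(b)$, so it is a bijective multiplicative isomorphism; moreover $R^{\circ}$ is still prime with non-trivial idempotent $e$, and shares its additive group with $R$, reducing Corollary \ref{cor2.10} to the automorphism case. The main obstacle is bookkeeping: at each application of condition \eqref{eqA} one must place the test element on the correct side so that the reversed multiplicative identity $\varphi(xy) = \varphi(y)\varphi(x)$ separates the intended Pierce components. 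Once additivity is established, the moreover clause is immediate, since a bijective additive map satisfying $\varphi(ab) = \varphi(b)\varphi(a)$ is by definition an anti-automorphism.
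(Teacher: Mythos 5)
Your proposal is correct, and in fact your ``conceptually cleaner alternative'' is precisely the paper's proof: the paper defines the identity anti-isomorphism $\tau: R \rightarrow R^{\circ}$ and observes that $\sigma = \tau \circ \varphi$ is a multiplicative isomorphism from $R$ onto $R^{\circ}$. The one substantive difference is what gets cited at that point. The paper invokes Martindale's theorem \cite{martindale1969multiplicative}, which is stated for multiplicative isomorphisms between \emph{different} rings, so it applies to $\sigma: R \rightarrow R^{\circ}$ as is. You instead propose reducing to ``the automorphism case,'' but Theorem \ref{thm2.1} and Corollary \ref{cor2.9} are stated only for self-maps of $R$, so they do not apply verbatim to a bijection $R \rightarrow R^{\circ}$; to make your reduction airtight you would need to note that every step in the proofs of Lemmas \ref{lem2.2}--\ref{lem2.8} uses only multiplicativity and injectivity of the map together with condition \eqref{eqA} on the \emph{domain}, so the argument carries over unchanged to multiplicative bijections onto an arbitrary ring. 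This is a citation-level caveat, not a gap. Your first route --- rerunning the Section 2 lemmas with the multiplication reversed --- is also sound and is really the same reduction unwound by hand: testing with $\varphi(x_{kj}z) = \varphi(z)\varphi(x_{kj})$ separates the row components of $z$, your left-sided version of \eqref{eqA} (``$x_{ij}a_{jk}=0$ for all $x_{ij}$ implies $a_{jk}=0$'') does follow from primeness via $e_i R (e_j a e_k) = 0$ with $e_i \neq 0$, and the analogue of the Lemma \ref{lem2.6} factorization holds because $xa_{21} + xb_{21} = (x + xa_{21})(e_1 + b_{21})$ for $x \in R_{22}$, $a_{21}, b_{21} \in R_{21}$ (using $R_{21}R_{21} = 0$), which is the opposite-ring translation of $(e_1+b_{12})(x_{22}+a_{12}x_{22})$. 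The explicit route buys self-containedness at the cost of redoing eight lemmas; the opposite-ring route, which the paper takes, dispatches the corollary in three lines.
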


\begin{proof}
	Let $\tau: R \rightarrow R^{\circ}$ be the map defined by
	\begin{align*}
		\tau(a)=a,
	\end{align*}
	for all $a\in R$. Then $\tau$ is an anti-isomorphism. Let $\sigma=\tau \circ \varphi$. Then $\sigma: R \rightarrow R^{\circ}$ is a multiplicative isomorphism. Then $\sigma$ is additive by a result in \cite{1969M}. Therefore, $\varphi$ is additive (Since $\tau$ is additive and one-one).
\end{proof}

\section{Multiplicative Skew Derivation}
\begin{theorem}
	\label{thm3.1}
	If $D$ is a multiplicative skew derivation on $R$, then $D$ is additive. Moreover, $D$ is a skew derivation on $R$.
\end{theorem}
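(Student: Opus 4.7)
The plan is to mirror the structure of Section 2, establishing additivity of $D$ piece by piece on the Peirce decomposition $R = R_{11} \oplus R_{12} \oplus R_{21} \oplus R_{22}$ via a sequence of lemmas (cross-block cases first, then same-block cases, then the full sum). The essential adaptation from Section 2 is that $D$ need not be bijective, so the device ``there exists $z$ with $\varphi(z) = \cdots$'' is unavailable. Instead, I would work with the defect $T(a,b) := D(a+b) - D(a) - D(b)$ and show $T(a,b) = 0$ by proving $T(a,b)\, x = 0$ for every $x$ in each $R_{jk}$, then applying condition \eqref{eqA} Peirce-block by Peirce-block.

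The engine of the whole argument is the identity
\begin{align*}
[D(a+b) - D(a) - D(b)]\, x
&= D((a+b)x) - D(ax) - D(bx) \\
&\quad - [\varphi(a+b) - \varphi(a) - \varphi(b)]\, D(x) \\
&= D((a+b)x) - D(ax) - D(bx),
\end{align*}
where the $\varphi$-term vanishes by Theorem \ref{thm2.1}. Since $(a+b)x = ax+bx$ in $R$, showing $T(a,b)\, x = 0$ reduces to verifying $D(ax+bx) = D(ax) + D(bx)$ for enough $x$. Before starting the cascade I would dispose of two bookkeeping facts: $D(0)=0$ from $D(0)=D(0\cdot 0)=D(0)\cdot 0+\varphi(0)D(0)$, and $D(1)=0$ from $D(a)=D(a\cdot 1)=D(a)+\varphi(a)D(1)$ for all $a$ together with the surjectivity of $\varphi$.

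For the cross-block lemmas (say $a\in R_{ij}$, $b\in R_{kl}$ with $j \neq l$), choosing $x\in R_{mn}$ kills one of $ax$ or $bx$ in every case: if $m=j$ then $bx=0$, if $m=l$ then $ax=0$, otherwise both vanish. Hence $D(ax+bx)=D(ax)+D(bx)$ trivially, and running $x$ over all Peirce blocks and applying \eqref{eqA} componentwise to $T(a,b) = \sum_{i,j} T(a,b)_{ij}$ gives $T(a,b)=0$. For the same-block lemmas (say $a,b\in R_{12}$), I would import Jing--Lu / Martindale-style product identities used in Lemmas \ref{lem2.5}--\ref{lem2.7}, most notably
\begin{align*}
(a+b)\, x_{22} = (e_1+b)(x_{22}+a\, x_{22}),
\end{align*}
and expand $D$ of the right-hand side by the skew Leibniz rule; the already-established cross-block additivities of $D$ and $\varphi$ let every mixed term collapse, producing $D((a+b)x_{22}) = D(ax_{22})+D(bx_{22})$, hence $T(a,b)\, x_{22}=0$; analogous tricks over the remaining Peirce blocks then annihilate each component of $T(a,b)$.

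The main obstacle I expect is organizing the lemmas in the correct order so that every same-block step only invokes cross-block cases already handled, and separately verifying that the ``Peirce components of $T$ vanish one at a time'' step really covers all four components of $T$ (this requires running $x$ through two distinct columns of Peirce blocks, not just one). Once all two-term additivities $D(a_{ij}+b_{kl})=D(a_{ij})+D(b_{kl})$ are in hand, the four-term version and then $D(a+b)=D(a)+D(b)$ for arbitrary $a,b\in R$ follow by decomposing via the idempotent $e$, exactly as in the closing argument of Theorem \ref{thm2.1}; the ``moreover'' clause is automatic, since an additive multiplicative skew derivation is by definition a skew derivation.
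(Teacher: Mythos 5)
Your overall architecture is the paper's own: the defect $T(a,b)=D(a+b)-D(a)-D(b)$, comparison of two skew-Leibniz expansions with the $\varphi$-terms cancelling by Theorem \ref{thm2.1}, the factorization $a_{12}+b_{12}c_{22}=(e_1+b_{12})(a_{12}+c_{22})$, and the ordering cross-block $\to$ same-block $\to$ four-term $\to$ full sum. But there is a genuine gap: your cross-block engine only treats pairs in \emph{different columns} of the Peirce decomposition (your hypothesis $j\neq l$), whereas the proof indispensably needs the \emph{same-column} cases $D(a_{11}+b_{21})=D(a_{11})+D(b_{21})$ and $D(a_{22}+b_{12})=D(a_{22})+D(b_{12})$ (cases (iii) and (iv) of the paper's Lemma \ref{lem3.3}). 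For $a\in R_{11}$, $b\in R_{21}$, right multiplication by $x\in R_{2n}$ kills both $ax$ and $bx$, and condition \eqref{eqA} then yields only $T_{12}=T_{22}=0$; to reach $T_{11}$ and $T_{21}$ you must multiply by $x\in R_{1n}$, but then both products survive and you need $D(a_{11}x_{1n}+b_{21}x_{1n})=D(a_{11}x_{1n})+D(b_{21}x_{1n})$, a sum of an $R_{1n}$- and an $R_{2n}$-element --- i.e.\ another same-column instance of exactly the additivity being proved ($n=1$ gives the (iii)-shape back, $n=2$ gives the (iv)-shape), so the two cases feed each other circularly. Since \eqref{eqA} is one-sided (right multiplication only), no choice of $x$ breaks the cycle. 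And these cases are not optional: your own same-block step expands $D(x_{22}+a\,x_{22})$ inside the factorization trick, a sum of an $R_{22}$- and an $R_{12}$-element sharing column $2$, and the four-term lemma needs $D(a_{11}x_{11}+c_{21}x_{11})=D(a_{11}x_{11})+D(c_{21}x_{11})$.

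The paper closes this hole with a left-sided device absent from your plan: compute $D(e_i(a_{11}+b_{21}))$ twice, once directly (it equals $D(e_i a_{11})+D(e_i b_{21})$ since one summand is $0$ and $D(0)=0$) and once by the Leibniz rule, obtaining $\varphi(e_1)\,T=0$ and $\varphi(e_2)\,T=0$; adding and using $\varphi(e_1)+\varphi(e_2)=1$ (Lemma \ref{lem2.4}, equivalently additivity of $\varphi$ together with $\varphi(1)=1$ from Lemma \ref{lem2.3}, both resting on bijectivity of $\varphi$) gives $T=0$ without ever invoking \eqref{eqA}. Grafting this step in before your factorization lemma repairs the argument, after which your outline matches the paper's proof step for step. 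Two minor remarks: your derivation of $D(1)=0$ is correct but never needed in this section, and your closing observation that the ``moreover'' clause is automatic is right, since an additive multiplicative skew derivation satisfies the skew-derivation axioms verbatim, with $\varphi$ an automorphism by Theorem \ref{thm2.1}.
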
 	

Let $\varphi$ be the associated multiplicative automorphism on $R$. Hence, by Theorem \ref{thm2.1}, $\varphi$ is additive. Before proving Theorem \ref{thm3.1}, we have several lemmas.

\begin{lemma}
	\label{lem3.2}
	\begin{align*}
		D(0)=0.
	\end{align*}
\end{lemma}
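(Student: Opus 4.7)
The plan is to imitate the short argument for Lemma \ref{lem2.2}, but using the skew-derivation identity instead of multiplicativity. Specifically, I will evaluate $D$ on the product $0 \cdot 0$ in two different ways.

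Since $D(ab) = D(a)b + \varphi(a)D(b)$ for all $a,b \in R$, taking $a = b = 0$ gives
\[
D(0) = D(0\cdot 0) = D(0)\cdot 0 + \varphi(0)\, D(0).
\]
Now $D(0)\cdot 0 = 0$ trivially, and $\varphi(0) = 0$ by Lemma \ref{lem2.2} (which applies because $\varphi$ is the multiplicative automorphism associated with $D$, already shown to satisfy $\varphi(0)=0$). Hence the right-hand side collapses to $0$, yielding $D(0) = 0$.

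There is no real obstacle here; the only thing to verify is that Lemma \ref{lem2.2} is indeed available, which it is, since the paper has fixed $\varphi$ as a multiplicative automorphism on $R$ and Theorem \ref{thm2.1} has already been established. Two or three lines of display math should suffice.
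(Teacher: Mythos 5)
Your argument is exactly the paper's proof: evaluate $D(0)=D(0\cdot 0)=D(0)\cdot 0+\varphi(0)D(0)$ and invoke Lemma \ref{lem2.2} to get $\varphi(0)=0$, so the right-hand side vanishes. Correct, and the same approach in every detail.
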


\begin{proof}
	\begin{align*}
		D(0)=D(0\cdot 0)=D(0)0+\varphi(0)D(0)=0 ~(\text{By Theorem \ref{thm2.1}}).
	\end{align*}
\end{proof}

\begin{lemma}
	\label{lem3.3}
	Let $p_{ij}, q_{ij} \in R_{ij}$. Then
	$$
	\begin{aligned}
		(i) &~ D (p_{11}+q_{12} )=D (p_{11} )+D (q_{12} ),\\
		(ii) &~  D (p_{22}+q_{21} )=D (p_{22} )+D (q_{21} ),\\
		(iii) &~  D (p_{11}+q_{21} )=D (p_{11} )+D (q_{21} ), \\
		(iv) &~  D (p_{22}+q_{12} )=D (p_{22} )+D (q_{12} ) .
	\end{aligned}
	$$
\end{lemma}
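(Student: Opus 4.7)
The plan is to establish each of (i)--(iv) by introducing the error term $T:=D(\text{sum})-D(\text{part}_1)-D(\text{part}_2)$ and showing $T=0$ via a Peirce-decomposition argument. We will lean on three previously available facts: the additivity of $\varphi$ (Theorem~\ref{thm2.1}), the identity $D(0)=0$ (Lemma~\ref{lem3.2}), and $\varphi(e_1)+\varphi(e_2)=1$ (Lemma~\ref{lem2.4}). Parts (i) and (ii) are the ``mixed-block'' cases where the two summands sit in Peirce components that share neither idempotent factor; these will yield to right multiplication by appropriate $x_{jk}$. Parts (iii) and (iv) are the ``same row/column'' cases and will require left multiplication by the orthogonal idempotents $e_1,e_2$ instead.

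For part (i), put $T=D(a_{11}+b_{12})-D(a_{11})-D(b_{12})$. Given $x_{22}\in R_{22}$, I would expand $D\bigl((a_{11}+b_{12})x_{22}\bigr)$ two ways: via the skew-derivation rule together with the additivity of $\varphi$ on one side, and via the simplification $(a_{11}+b_{12})x_{22}=b_{12}x_{22}$ on the other. Substituting the auxiliary identity $\varphi(a_{11})D(x_{22})=-D(a_{11})x_{22}$ (which comes from $0=D(0)=D(a_{11}x_{22})=D(a_{11})x_{22}+\varphi(a_{11})D(x_{22})$) collapses the equation to $T\cdot x_{22}=0$; assumption \eqref{eqA} then gives $T_{12}=T_{22}=0$. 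Repeating with $x_{12}\in R_{12}$, using $b_{12}x_{12}=0$, yields $T\cdot x_{12}=0$ and hence $T_{11}=T_{21}=0$, so $T=0$. Part (ii) follows by swapping $e_1\leftrightarrow e_2$.

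For part (iii), set $T=D(a_{11}+b_{21})-D(a_{11})-D(b_{21})$. Since $a_{11}$ and $b_{21}$ share their right idempotent $e_1$, right multiplication fails to separate them; the plan is to left-multiply by $e_1$ and by $e_2$. Expanding $D\bigl(e_1(a_{11}+b_{21})\bigr)=D(a_{11})$ via the skew-derivation rule gives one expression, while separately expanding $D(e_1a_{11})=D(a_{11})$ and $D(e_1b_{21})=0$ and adding gives another; subtracting produces $\varphi(e_1)T=0$. A parallel computation with $e_2$ yields $\varphi(e_2)T=0$; summing and invoking $\varphi(e_1)+\varphi(e_2)=1$ forces $T=0$. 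Part (iv) proceeds identically with the roles of $a_{11},b_{21}$ replaced by $a_{22},b_{12}$.

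The principal obstacle is the bookkeeping: each step couples the skew-derivation rule with an auxiliary vanishing-product identity such as $\varphi(a_{11})D(x_{22})=-D(a_{11})x_{22}$, and one must repeatedly cancel cross-terms of the form $\varphi(\cdot)D(\cdot)$. This cancellation is legitimate only because $\varphi$ is \emph{known} to be additive, so the placement of this lemma after Theorem~\ref{thm2.1} is crucial. Care is also needed in (iii)--(iv) to compute $D(e_i\cdot)$ rather than $D(\cdot\, e_i)$, since the latter would merely recover $a_{11}+b_{21}$ itself and provide no new information.
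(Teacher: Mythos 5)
Your proposal is correct and takes essentially the same route as the paper's proof: right multiplication by $x_{22}$ and $x_{12}$ combined with condition \eqref{eqA} for parts (i)--(ii), and left multiplication by $e_1$ and $e_2$ followed by $\varphi(e_1)+\varphi(e_2)=1$ for parts (iii)--(iv), with your auxiliary identity $\varphi(a_{11})D(x_{22})=-D(a_{11})x_{22}$ being exactly the paper's device of inserting $D(a_{11}x_{22})=D(0)=0$ into the expansion. Incidentally, your appeal to Lemma \ref{lem2.4} at the final step of (iii) is the correct reference; the paper itself mis-cites Lemma \ref{lem2.3} (which gives $\varphi(1)=1$, not $\varphi(e_1)+\varphi(e_2)=1$) at that point.
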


\begin{proof}
	Let $x_{22} \in R_{22}$. Then
	
	\begin{equation}
		\label{eq3.1}
		D ( (p_{11}+q_{12} ) x_{22} ) = D (p_{11}+q_{12} ) x_{22}+\varphi (p_{11}+q_{12} ) D (x_{22} ).
	\end{equation}
	On the other hand,	
	\begin{equation}
		\label{eq3.2}
		\begin{aligned}
			& D ( (p_{11}+q_{12} ) x_{22} )\\
			=&~D (q_{12} x_{22} ) \\
			=&~D (p_{11} x_{22} )+D (q_{12} x_{22} ) \\
			=&~D (p_{11} ) x_{22}+\varphi (p_{11} ) D (x_{22} )+D (q_{12} ) x_{22}+\varphi (q_{12} ) D (x_{22} ).
		\end{aligned}
	\end{equation}
	Comparing \eqref{eq3.1} and \eqref{eq3.2}, we get
	$$
	\begin{aligned}
		&{ [D (p_{11}+q_{12} )-D (p_{11} )-D (q_{12} ) ] x_{22}=0} ~\text{(Since} ~\varphi~\text{is additive)}\\
		\implies~& [D (p_{11}+q_{12} )-D (p_{11} )-D (q_{12} ) ]_{12} x_{22}=0\\
		\& ~&[D (p_{11}+q_{12} )-D (p_{11} )-D (q_{12} ) ]_{22} x_{22}=0.
	\end{aligned}
	$$
	By the assumption \eqref{eqA} on $R$,
	$$
	\begin{gathered}
		{ [D (p_{11}+q_{12} )-D (p_{11} )-D (q_{12} ) ]_{12}=0 } \\
		\& ~[D (p_{11}+q_{12} )-D (p_{11} )-D (q_{12} ) ]_{22}=0.
	\end{gathered}
	$$
	Let $x_{12} \in R_{12}$. Then
	\begin{equation}
		\label{eq3.3}
		D ( (p_{11}+q_{12} ) x_{12} ) = D (p_{11}+q_{12} ) x_{12}+\varphi (p_{11}+q_{12} ) D (x_{12}).
	\end{equation}
	Also,
	\begin{equation}
		\label{eq3.4}
		\begin{aligned}
			D ( (p_{11}+q_{12} ) x_{12} )
			&=D (p_{11} x_{12} ) \\
			&=D (p_{11} x_{12} )+D (q_{12} x_{12} ) \\
			&=D (p_{11} ) x_{12}+\varphi (p_{11} ) D (x_{12} )+D (q_{12} ) x_{12}+\varphi (q_{12} ) D (x_{12} ).
		\end{aligned}
	\end{equation}
	Comparing \eqref{eq3.3} and \eqref{eq3.4},
	$$
	[D (p_{11}+q_{12} )-D (p_{11} )-D (q_{12} ) ] x_{12}=0.
	$$
	Again, by the condition \eqref{eqA} on $R$, we have
	$$
	\begin{aligned}
		&{ [D (p_{11}+q_{12} )-D (p_{11} )-D (q_{12} ) ]_{11}=0} \\
		\&~ & [D (p_{11}+q_{12} )-D (p_{11} )-D (q_{12} ) ]_{21}=0.
	\end{aligned}
	$$
	Hence,
	\begin{align*}
		D (p_{11}+q_{12} )=D (p_{11} )+D (q_{12} ).
	\end{align*}
	Similarly, we can prove
	\begin{align*}
		D (p_{22}+q_{21} )=D (p_{22} )+D (q_{21} ).
	\end{align*}
	Now,
	\begin{equation}
		\label{eq3.5}
		D(e_1(p_{11}+q_{21}))=D(e_1)(p_{11}+q_{21})+\varphi(e_1)D(p_{11}+q_{21}).
	\end{equation}
	Also,
	\begin{equation}
		\label{eq3.6}
		\begin{aligned}
			D(e_1(p_{11}+q_{21}))&=D(e_1 p_{11})\\
			&=D(e_1 p_{11})+D(e_1 q_{21})\\
			&=D(e_1)p_{11}+\varphi(e_1)D(p_{11})+D(e_1)q_{21}+\varphi(e_1)D(q_{21}).
		\end{aligned}
	\end{equation}
	Comparing \eqref{eq3.5} and \eqref{eq3.6},		
	\begin{equation}
		\label{eq3.7}
		\varphi(e_1)(D(p_{11}+q_{21})-D(p_{11})-D(q_{21}))=0.
	\end{equation}
	Similarly,
	\begin{equation}
		\label{eq3.8}
		\varphi(e_2)(D(p_{11}+q_{21})-D(p_{11})-D(q_{21}))=0.
	\end{equation}
	Adding \eqref{eq3.7} and \eqref{eq3.8}, we have
	\begin{align*}
		&(\varphi(e_1)+\varphi(e_2))(D(p_{11}+q_{21})-D(p_{11})-D(q_{21}))=0\\
		&\implies D(p_{11}+q_{21})=D(p_{11})+D(q_{21}) ~(\text{By Lemma \ref{lem2.3}}).
	\end{align*}
	Similarly, we can prove
	\begin{align*}
		D (p_{22}+q_{12} )=D (p_{22} )+D (q_{12} ).
	\end{align*}
\end{proof}

\begin{lemma}
	\label{lem3.4}
	Let $p_{ij}$, $q_{ij}, c_{ij} \in R_{ij}$. Then
	$$
	\begin{aligned}
		(i) &~ D (p_{12}+q_{12} c_{22} )=D (p_{12} )+D (q_{12} c_{22} ), \\
		(ii) &~ D (p_{21}+q_{22} c_{21} )=D (p_{21} )+D (q_{22} c_{21} ).
	\end{aligned}
	$$
\end{lemma}

\begin{proof}
	Note that
	\begin{align*}
		p_{12}+q_{12} c_{22}= (e_1+q_{12} )(p_{12}+c_{22} ).
	\end{align*}		
	Therefore,	
	$$
	\begin{aligned}
		D (p_{12}+q_{12} c_{22} )
		=~& D ( (e_1+q_{12} ) (p_{12}+c_{22} ) ) \\
		=~& D (e_1+q_{12} ) (p_{12}+c_{22} )+\varphi (e_1+q_{12} ) D (p_{12}+c_{22} ) \\
		=~& { [D (e_1 )+D (q_{12} ) ] (p_{12}+c_{22} )+ [\varphi (e_1 )+\varphi (q_{12} ] } [D (p_{12} )+D (c_{22} ) ]\\
		&~\text{(By Lemma \ref{lem3.3})} \\
		=~& D (e_1 p_{12} )+D (e_1 c_{22} )+D (q_{12} c_{22} ) +D (q_{12} p_{12} ) \\
		=~& D (p_{12} )+D (q_{12} c_{22} )~\text{(By Lemma \ref{lem3.2})} .
	\end{aligned}
	$$
	Similarly, we can prove that
	\begin{align*}
		D (p_{21}+q_{22} c_{21} )=D (p_{21} )+D (q_{22} c_{21} ).
	\end{align*}
\end{proof}

\begin{lemma}
	\label{lem3.5}
	Let $p_{12}, q_{12} \in R_{12}$ and $p_{21}, q_{21} \in R_{21}$. Then
	$$
	\begin{aligned}
		(i) &~ D (p_{12}+q_{12} )=D (p_{12} )+D (q_{12} ), \\
		(ii) &~ D (p_{21}+q_{21} )=D (p_{21} )+D (q_{21} ).
	\end{aligned}
	$$
\end{lemma}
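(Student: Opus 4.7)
The plan is to derive Lemma \ref{lem3.5} as an essentially immediate corollary of Lemma \ref{lem3.4} by specializing the third free variable to the idempotent $e_2$. For part (i), I would set $c_{22} = e_2$ in Lemma \ref{lem3.4}(i). Since $b_{12} \in R_{12} = e_1 R e_2$, we have $b_{12} e_2 = b_{12}$, so Lemma \ref{lem3.4}(i) collapses to $D(a_{12} + b_{12}) = D(a_{12}) + D(b_{12})$. For part (ii), set $b_{22} = e_2$ in Lemma \ref{lem3.4}(ii); since $c_{21} \in R_{21} = e_2 R e_1$ gives $e_2 c_{21} = c_{21}$, the identity reduces to the desired $D(a_{21} + b_{21}) = D(a_{21}) + D(b_{21})$. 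The only thing to check is that $e_2$ genuinely lies in $R_{22}$, which is immediate from $e_2 = e_2 \cdot 1 \cdot e_2$.

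If instead one wanted a self-contained argument paralleling the earlier lemmas, the natural route is via the factorization $a_{12} + b_{12} = (e_1 + b_{12})(a_{12} + e_2)$; this holds because $e_1 a_{12} = a_{12}$, $b_{12} e_2 = b_{12}$, while the cross-products $e_1 e_2$ and $b_{12} a_{12}$ vanish. Applying the multiplicative skew derivation identity and then using Lemma \ref{lem3.3}(i),(iv) to split $D(e_1 + b_{12})$ and $D(a_{12} + e_2)$, combined with the additivity of $\varphi$ from Theorem \ref{thm2.1}, produces eight terms. These group into four Leibniz pairs: $D(e_1) a_{12} + \varphi(e_1) D(a_{12}) = D(e_1 a_{12}) = D(a_{12})$, $D(e_1) e_2 + \varphi(e_1) D(e_2) = D(0) = 0$, $D(b_{12}) a_{12} + \varphi(b_{12}) D(a_{12}) = D(0) = 0$, and $D(b_{12}) e_2 + \varphi(b_{12}) D(e_2) = D(b_{12})$. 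Summing recovers $D(a_{12} + b_{12}) = D(a_{12}) + D(b_{12})$. Part (ii) is handled by the dual factorization $a_{21} + b_{21} = (a_{21} + e_2)(e_1 + b_{21})$.

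The main (and essentially only) obstacle is spotting the right factorization — equivalently, recognizing the correct specialization of Lemma \ref{lem3.4}; once that is in hand, everything else is bookkeeping that leans on results already in place (Lemmas \ref{lem3.2}, \ref{lem3.3}, \ref{lem3.4} and Theorem \ref{thm2.1}). Notably, this step does not require the condition \eqref{eqA} again, since Lemma \ref{lem3.4} has already absorbed that use.
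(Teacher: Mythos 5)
Your proof is correct, but it takes a genuinely different and shorter route than the paper's. The paper does not exploit the fact that Lemma \ref{lem3.4} already contains Lemma \ref{lem3.5} as the special case $c_{22}=e_2$ (resp.\ $b_{22}=e_2$); instead it re-runs the Martindale-style peeling argument: it computes $D[(a_{12}+b_{12})x_{22}]$ and $D[(a_{12}+b_{12})x_{12}]$ in two ways for arbitrary $x_{22}\in R_{22}$ and $x_{12}\in R_{12}$ (using Lemma \ref{lem3.4} only to split $D(a_{12}x_{22}+b_{12}x_{22})$ into $D(a_{12}x_{22})+D(b_{12}x_{22})$), deduces $[D(a_{12}+b_{12})-D(a_{12})-D(b_{12})]\,x=0$ for both choices of $x$, and then invokes condition \eqref{eqA} to kill all four Peirce components. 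Your specialization is legitimate: $e_2=e_2\cdot 1\cdot e_2\in R_{22}$, the Peirce relations give $b_{12}e_2=b_{12}$ and $e_2c_{21}=c_{21}$, and there is no circularity, since the proof of Lemma \ref{lem3.4} rests only on Lemmas \ref{lem3.2}, \ref{lem3.3} and the additivity of $\varphi$ (Theorem \ref{thm2.1}). Your self-contained fallback is simply the proof of Lemma \ref{lem3.4} evaluated at $e_2$: the factorizations $a_{12}+b_{12}=(e_1+b_{12})(a_{12}+e_2)$ and $a_{21}+b_{21}=(a_{21}+e_2)(e_1+b_{21})$ check out, as do all four Leibniz pairs you list, and you correctly use Lemma \ref{lem3.3}(i),(iv) (resp.\ (ii),(iii)) to split $D(e_1+b_{12})$ and $D(a_{12}+e_2)$. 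You are also right that your route never re-invokes \eqref{eqA}, whereas the paper's does. What the paper's longer template buys is robustness: the peeling argument does not require $e_2$ to be an honest ring element, so it survives in the non-unital versions of such additivity theorems common in this literature, where $e_2=1-e$ is only a formal symbol and $R_{22}=\{x-ex-xe+exe : x\in R\}$; your one-line specialization is tied to the standing assumption $1\in R$. Under the paper's hypotheses, your argument is a clean simplification of the published proof.
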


\begin{proof}
	Let $x_{22} \in R_{22}$. Then
	$$
	\begin{aligned}
		&~D(p_{12}+q_{12} ) x_{22}+\varphi (p_{12}+q_{12} ) D (x_{22} )\\
		=&~D [ (p_{12}+q_{12} ) x_{22} ]\\
		=&~D (p_{12} x_{22} + q_{12} x_{22} ) \\
		=&~D (p_{12} x_{22} )+D (q_{12} x_{22} ) ~\text{(By Lemma \ref{lem3.4})}\\
		=&~D (p_{12} ) x_{12}+\varphi (p_{12} ) D (x_{22} )+D (q_{12} ) x_{22}+\varphi (q_{12} ) D (x_{22} ).
	\end{aligned}
	$$
	Consequently, we obtain
	\begin{equation}
		\label{eq3.9}
		\begin{aligned}
			&[D (p_{12}+q_{12} )-D (p_{12} )-D (q_{12} ) ] x_{22}=0\\
			\implies ~&[D (p_{12}+q_{12} )-D (p_{12} )-D (q_{12} ) ]_{12}=0\\
			\&~ & [D (p_{12}+q_{12} )-D (p_{12} )-D (q_{12} ) ]_{22}=0.
		\end{aligned}
	\end{equation}
	Let $x_{12} \in R_{12}$. Then
	$$
	\begin{aligned}
		&D(p_{12}+q_{12} ) x_{12}+\varphi (p_{12}+q_{12} ) D (x_{12} )\\
		&=D [ (p_{12}+q_{12} ) x_{12} ] \\
		&=D(0)=0 \\
		&=D (p_{12} x_{12} )+D (q_{12} x_{12} ) \\
		&=D (p_{12} ) x_{12}+\varphi (p_{12} ) D (x_{12} )+D (q_{12} ) x_{12}+\varphi (q_{12} ) D (x_{12} ),
	\end{aligned}
	$$
	which gives
	\begin{equation}
		\label{eq3.10}
		\begin{aligned}
			&[D (p_{12}+q_{12} )-D (p_{12} )-D (q_{12} ) ] x_{12}=0\\
			\implies ~&[D (p_{12}+q_{12} )-D (p_{12} )-D (q_{12} ) ]_{11}=0\\
			\&~ & [D (p_{12}+q_{12} )-D (p_{12} )-D (q_{12} ) ]_{21}=0.
		\end{aligned}
	\end{equation}
	Hence, by \eqref{eq3.9} and \eqref{eq3.10},
	\begin{align*}
		D (p_{12}+q_{12} )=D (p_{12} )+D (q_{12} ).
	\end{align*}
	Similarly, we can get
	\begin{align*}
		D (p_{21}+q_{21} )=D (p_{21} )+D (q_{21} ).
	\end{align*}		
\end{proof}

\begin{lemma}
	\label{lem3.6}
	Let $p_{ii}, q_{ii} \in R_{ii}$, where $i \in \{1,2\}$. Then
	$$
	\begin{aligned}
		& (i) ~D (p_{11}+q_{11} )=D (p_{11} )+D (q_{11} ),\\
		& (ii) ~D (p_{22}+q_{22} )=D (p_{22} )+D (q_{22} ).
	\end{aligned}
	$$
\end{lemma}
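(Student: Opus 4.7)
The plan is to follow the template of Lemma~\ref{lem2.7}, testing the difference $T := D(a_{11}+b_{11}) - D(a_{11}) - D(b_{11})$ against arbitrary Peirce-component elements drawn from both $R_{12}$ and $R_{21}$, and then invoking condition~\eqref{eqA} to kill all four components of $T$. Since $\varphi$ has already been shown to be additive by Theorem~\ref{thm2.1}, the $\varphi(a_{11}+b_{11})D(x)$ contributions that appear when the skew-derivation rule is applied will match on both sides and cancel cleanly after subtraction.

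For part (i), I would first pick $x_{12} \in R_{12}$ and compute $D\bigl((a_{11}+b_{11})x_{12}\bigr)$ in two ways. On one hand, the skew-derivation identity applied to the product $(a_{11}+b_{11})x_{12}$ gives $D(a_{11}+b_{11})x_{12} + \varphi(a_{11}+b_{11})D(x_{12})$. On the other, write $(a_{11}+b_{11})x_{12} = a_{11}x_{12}+b_{11}x_{12}$, observe that both summands lie in $R_{12}$, and use Lemma~\ref{lem3.5}(i) to split $D$ additively before expanding each piece by the skew-derivation rule. Equating the two expressions (using additivity of $\varphi$) yields $T x_{12}=0$ for every $x_{12} \in R_{12}$, and \eqref{eqA} then forces $T_{11}=T_{21}=0$.

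Next, take $x_{21} \in R_{21}$. Because $R_{11}R_{21}=0$, each of the products $(a_{11}+b_{11})x_{21}$, $a_{11}x_{21}$ and $b_{11}x_{21}$ is zero, so Lemma~\ref{lem3.2} makes both expansions of $D\bigl((a_{11}+b_{11})x_{21}\bigr)$ collapse to zero. Subtracting gives $Tx_{21}=0$, and \eqref{eqA} delivers $T_{12}=T_{22}=0$. Together with the previous step this forces $T=0$, which is (i). Part (ii) is entirely symmetric: swap the roles of $e_1$ and $e_2$, and test against $x_{21}\in R_{21}$ (using Lemma~\ref{lem3.5}(ii)) followed by $x_{12}\in R_{12}$ (using that $R_{22}R_{12}=0$).

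No essential obstacle arises; the step is a routine transcription of the method used in Lemma~\ref{lem2.7} into the skew-derivation setting. The only care required is bookkeeping: at each application of the skew-derivation rule I must pair the split of the right-hand product with the correct already-established additivity statement (Lemma~\ref{lem3.5} on the $R_{12}$ or $R_{21}$ slot), and I must cite Theorem~\ref{thm2.1} to justify $\varphi(a_{11}+b_{11})=\varphi(a_{11})+\varphi(b_{11})$ so that the $\varphi\,D(x)$ terms cancel rather than leaving a residue.
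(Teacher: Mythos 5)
Your proof is correct and takes essentially the same approach as the paper: the same two-fold expansion of $D\bigl((a_{11}+b_{11})x\bigr)$, splitting via Lemma \ref{lem3.5} on one test slot and via $D(0)=0$ on the other, cancelling the $\varphi(\cdot)D(x)$ terms by the additivity of $\varphi$ (Theorem \ref{thm2.1}), and killing all four Peirce components of the difference by condition \eqref{eqA}. The only (immaterial) deviation is that you test against $x_{21}\in R_{21}$ where the paper tests against $x_{22}\in R_{22}$; both choices annihilate exactly the $(1,2)$- and $(2,2)$-components, since $R_{11}R_{21}=0=R_{11}R_{22}$.
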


\begin{proof} Let $x_{22} \in R_{22}$. Then
	$$
	\begin{aligned}
		&D (p_{11}+q_{11} ) x_{22}+\varphi (p_{11}+q_{11} ) D (x_{22} )\\
		=& D [ (p_{11}+q_{11} ) x_{22} ] \\
		=& D(0)=0 \\
		=& D (p_{11} x_{22} )+D (q_{11} x_{22} ) \\
		=& D (p_{11} ) x_{22}+\varphi (p_{11} ) D (x_{22} )+D (q_{11} ) x_{22}+\varphi (q_{11} ) D (x_{22} )
	\end{aligned}
	$$
	Therefore,
	\begin{align*}
		[D (p_{11}+q_{11} )-D (p_{11} )-D (q_{11} ) ] x_{22}=0,
	\end{align*}
	which implies
	\begin{equation}
		\label{eq3.11}
		\begin{gathered}
			{ [D (p_{11}+q_{11} )-D (p_{11} )-D (q_{11} ) ]_{12}=0} \\
			\& ~[D (p_{11}+q_{11} )-D (p_{11} )-D (q_{11} ) ]_{22}=0
		\end{gathered}
	\end{equation}
	Let $x_{12} \in R_{12}$. Then
	$$
	\begin{aligned}
		&D (p_{11}+q_{11}) x_{12}+\varphi (p_{11}+q_{11} ) D (x_{12} )\\
		& =D [( p_{11}+q_{11} ) x_{12} ]\\
		&= D (p_{11} x_{12}+q_{11} x_{12} ) \\
		&=D (p_{11} x_{12} )+D (q_{11} x_{12} )~\text{(By Lemma \ref{lem3.5})} \\
		&=D (p_{11} ) x_{12}+\varphi (p_{11} ) D (x_{12} )+D (q_{11} ) x_{12}+\varphi (q_{11} ) D (x_{12} )
	\end{aligned}
	$$
	Therefore,
	\begin{align*}
		[D (p_{11}+q_{11} )-D (p_{11} )-D (q_{11} ) ] x_{12}=0,
	\end{align*}
	which implies
	\begin{equation}
		\label{eq3.12}
		\begin{gathered}
			[D (p_{11}+q_{11} )-D (p_{11} )-D (q_{11} ) ]_{11}=0\\
			\&~ [D (p_{11}+q_{11} )-D (p_{11} )-D (q_{11} ) ]_{21}=0.
		\end{gathered}
	\end{equation}
	Hence, by \eqref{eq3.11} and \eqref{eq3.12},
	\begin{align*}
		D (p_{11}+q_{11} )=D (p_{11} )+D (q_{11} ).
	\end{align*}		
	Similarly, we can prove that
	\begin{align*}
		D (p_{22}+q_{22} )=D (p_{22} )+D (q_{22} ).
	\end{align*}		
\end{proof}

\begin{lemma}
	\label{lem3.7}
	Let $p_{11} \in R_{11},~ q_{12} \in R_{12},~ c_{21} \in R_{21},~ d_{22} \in R_{22}$. Then
	\begin{align*}
		D (p_{11}+q_{12}+c_{21}+d_{22} )= D (p_{11} )+D (q_{12} ) +D (c_{21} )+D (d_{22} ).
	\end{align*}		 		
\end{lemma}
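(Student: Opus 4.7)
The plan is to imitate the argument of Lemma 2.8, but using the skew-derivation identity and crucially exploiting the fact that the associated multiplicative automorphism $\varphi$ is already known to be additive by Theorem \ref{thm2.1}. Write $w=a_{11}+b_{12}+c_{21}+d_{22}$ and set
\[
S := D(w)-D(a_{11})-D(b_{12})-D(c_{21})-D(d_{22}).
\]
I would compute $D(w\,x_{11})$ in two different ways for an arbitrary $x_{11}\in R_{11}$, compare, and then do the analogous computation with $x_{22}\in R_{22}$, so that condition \eqref{eqA} forces all four Peirce components of $S$ to vanish.

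\textbf{First computation.} Applying the multiplicative skew-derivation identity directly,
\[
D(w\,x_{11})=D(w)\,x_{11}+\varphi(w)\,D(x_{11}).
\]
\textbf{Second computation.} Since $b_{12}x_{11}=0=d_{22}x_{11}$, we have $w\,x_{11}=a_{11}x_{11}+c_{21}x_{11}$, where $a_{11}x_{11}\in R_{11}$ and $c_{21}x_{11}\in R_{21}$. Lemma \ref{lem3.3}(iii) then gives
\[
D(w\,x_{11})=D(a_{11}x_{11})+D(c_{21}x_{11}).
\]
Now expand each of the four products $D(X\,x_{11})$ for $X\in\{a_{11},b_{12},c_{21},d_{22}\}$ by the skew-derivation rule; the expansions of $D(b_{12}x_{11})$ and $D(d_{22}x_{11})$ each equal $D(0)=0$ by Lemma \ref{lem3.2}, so adding them back does not change the right-hand side. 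Using the additivity of $\varphi$ (Theorem \ref{thm2.1}) to combine $\varphi(a_{11})+\varphi(b_{12})+\varphi(c_{21})+\varphi(d_{22})=\varphi(w)$, the second expression for $D(w\,x_{11})$ becomes
\[
\bigl[D(a_{11})+D(b_{12})+D(c_{21})+D(d_{22})\bigr]x_{11}+\varphi(w)\,D(x_{11}).
\]

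\textbf{Comparison.} Subtracting the two expressions, the terms $\varphi(w)\,D(x_{11})$ cancel and one obtains $S\,x_{11}=0$ for every $x_{11}\in R_{11}$. By assumption \eqref{eqA}, this yields $[S]_{11}=0$ and $[S]_{21}=0$. Repeating the same procedure with $x_{22}\in R_{22}$ in place of $x_{11}$ (this time $a_{11}x_{22}=0=c_{21}x_{22}$, so $w\,x_{22}=b_{12}x_{22}+d_{22}x_{22}$, and one invokes Lemma \ref{lem3.3}(iv) to split $D(b_{12}x_{22}+d_{22}x_{22})$) produces $S\,x_{22}=0$ for all $x_{22}$, and hence $[S]_{12}=0$ and $[S]_{22}=0$. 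Combining the four, $S=0$, which is the desired identity.

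I do not anticipate any real obstacle: the argument is a routine extension of the Lemma 2.8 template. The only point requiring a bit of care is the bookkeeping in the second computation, namely that one is free to add the vanishing terms $D(b_{12}x_{11})$ and $D(d_{22}x_{11})$ precisely because each equals both $0$ (as $b_{12}x_{11}=d_{22}x_{11}=0$) and its skew-derivation expansion; this is what allows the full $\varphi(w)$ to reappear after invoking the additivity of $\varphi$ and makes the two expressions for $D(w\,x_{11})$ comparable.
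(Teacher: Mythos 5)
Your proposal is correct and follows essentially the same path as the paper's own proof: compute $D(w x_{11})$ and $D(w x_{22})$ in two ways, split the products using Lemma \ref{lem3.2} and Lemma \ref{lem3.3}, invoke the additivity of $\varphi$ from Theorem \ref{thm2.1} to recombine the $\varphi$-terms, and apply condition \eqref{eqA} to annihilate all four Peirce components of $S$. Your explicit justification for re-inserting the vanishing terms $D(b_{12}x_{11})$ and $D(d_{22}x_{11})$ is exactly the bookkeeping the paper performs implicitly, so there is nothing to fix.
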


\begin{proof}
	Let $x_{11} \in R_{11}$. Then
	$$
	\begin{aligned}
		& D (p_{11}+q_{12}+c_{21}+d _{22} ) x_{11} +\varphi (p_{11}+q_{12}+c_{21}+d_{22} ) D (x_{11} ) \\
		=&D [ (p_{11}+q_{12}+c_{21}+ d_{22} ) x_{11} ]\\
		=& D (p_{11} x_{11}+c_{21} x_{11} ) \\
		=& D (p_{11} x_{11} )+D (q_{12} x_{11} )+D (c_{21} x_{11} )+D (d_{22} x_{11} )\\
		&~\text{(By Lemma \ref{lem3.2} and \ref{lem3.3})} \\
		=& D (p_{11} ) x_{11}+\varphi (p_{11} ) D (x_{11} )+D (q_{12} ) x_{11}+\varphi (q_{12} ) D (x_{11} ) \\
		&+D (c_{21} ) x_{11}+\varphi (c_{21} ) D (x_{11} )+D (d _{22} ) x_{11}+\varphi (d_{22} ) D (x_{11} ).
	\end{aligned}
	$$
	Comparing both sides and using the additivity of $\varphi$, we have
	\begin{equation}
		\label{eq3.13}
		\begin{aligned}
			&  [D (p_{11}+q_{12}+c_{21}+d_{22} )-D (p_{11} )-D (q_{12} )-D (c_{21} )-D (d_{22} ) ] x_{11}=0\\
			\implies &   [D (p_{11}+q_{12}+c_{21}+d_{22} )-D (p_{11} )-D (q_{12} )-D (c_{21} )-D (d_{22} ) ]_{11}=0\\
			&  \& ~[D (p_{11}+q_{12}+c_{21}+d_{22} )-D (p_{11} )-D (q_{12} )-D (c_{21} )-D (d_{22} ) ] _{21}=0.
		\end{aligned}
	\end{equation}
	Similarly, by taking $x_{22}$ from $R_{22}$, we can get
	\begin{equation}
		\label{eq3.14}
		\begin{aligned}
			& [D (p_{11}+q_{12}+c_{21}+d_{22} )-D (p_{11} )-D (q_{12} )-D (c_{21} )-D (d_{22} ) ]_{12}=0\\
			& \& ~  [D (p_{11}+q_{12}+c_{21}+d_{22} )-D (p_{11} )-D (q_{12} )-D (c_{21} )-D (d_{22} ) ] _{22}=0.
		\end{aligned}
	\end{equation}
	Hence, by \eqref{eq3.13} and \eqref{eq3.14},
	\begin{align*}
		D (p_{11}+q_{12}+c_{21}+d_{22} )= D (p_{11} )+D (q_{12} ) +D (c_{21} )+D (d_{22} ).
	\end{align*}	
\end{proof}

\begin{proof}[Proof of Theorem \ref{thm3.1}]
	Let $a, b \in R$. Then
	\begin{align*}
		& a=p_{11}+p_{12}+p_{21}+p_{22},\\
		& b=q_{11}+q_{12}+q_{21}+q_{22},
	\end{align*}		
	for some $p_{ij}, q_{ij} \in R_{ij}$.
	Now,
	$$
	\begin{aligned}
		D(a+b) =&D (p_{11}+p_{12}+p_{21}+p_{22}+q_{11}+q_{12}+q_{21}+q_{22} ) \\
		=& D [ (p_{11}+q_{11} )+ (p_{12}+q_{12} )+ (p_{21}+q_{21} )+ (p_{22}+q_{22} ) ] \\
		=& D (p_{11}+q_{11} )+D (p_{12}+q_{12} )+D (p_{21}+q_{21} )+D (p_{22}+q_{22} ) \\
		&~\text{(By Lemma \ref{lem3.7})}\\
		=& D (p_{11} )+D (q_{11} )+D (p_{12} )+D (q_{12} )\\
		&+D (p_{21} ) +D (q_{21} )+D (p_{22} )+D (q_{22} ) ~\text{(By Lemma \ref{lem3.5} and \ref{lem3.6})}\\
		=& D (p_{11}+p_{12}+p_{21}+p_{22} ) +D(q_{11} +q_{12}+q_{21}+q_{22} ) \\
		&~\text{(By Lemma \ref{lem3.7})}\\
		=& D(a) + D(b).
	\end{aligned}
	$$
	Hence, $D$ is additive.
\end{proof}

\begin{corollary}
	\label{cor3.8}
	Let $R$ be a $2$-torsion free semi-prime ring with a non-trivial idempotent $e$ and satisfies the condition
	\begin{equation}
		\tag{B}
		a_{i i} x_{i j}=0,~ \text{for some }~a_{ii}\in R_{ii}~\text{and for all} ~x_{i j} \in R_{i j}~(i\neq j), ~\text{then}~ a_{i i}=0.
		\label{eqB}
	\end{equation}	
	Then every multiplicative skew derivation $D$ over $R$ is additive. Moreover, $D$ is a skew derivation on $R$.
\end{corollary}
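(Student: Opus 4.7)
The plan is to reduce the corollary to Theorem \ref{thm3.1} by checking that the stated hypotheses (semi-prime, $2$-torsion free, and condition \eqref{eqB}) imply the assumption \eqref{eqA}. Once \eqref{eqA} is established on $R$, Theorem \ref{thm3.1} directly yields both conclusions: additivity of $D$, and that $D$ is a skew derivation on $R$.

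To verify \eqref{eqA}, I would enumerate the eight index combinations $(i,j,k)\in\{1,2\}^{3}$ corresponding to ``$a_{ij}R_{jk}=0$''. The two triples $(1,1,2)$ and $(2,2,1)$ are precisely the statement of \eqref{eqB} and require no further argument. For each of the remaining six triples, the plan is to show $a_{ij}Ra_{ij}=0$ and then invoke semi-primeness of $R$. Expanding $a_{ij}Ra_{ij}=\sum_{\ell,m}a_{ij}R_{\ell m}a_{ij}$ via the Peirce decomposition, the orthogonality $e_{1}e_{2}=e_{2}e_{1}=0$ yields $R_{pq}R_{rs}=0$ whenever $q\neq r$, which kills most of the summands automatically. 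The one or two Peirce pieces that do survive can always be re-associated so that the hypothesis $a_{ij}R_{jk}=0$ applies. For example, in the case $a_{12}R_{22}=0$, the only surviving summand is $a_{12}R_{21}a_{12}$, and for any $x_{21}\in R_{21}$ one has $x_{21}a_{12}\in R_{22}$, so that $a_{12}(x_{21}a_{12})=0$; a symmetric argument handles the remaining five triples.

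The main obstacle is just bookkeeping: carefully running through all six non-\eqref{eqB} triples and verifying, in each, that every surviving Peirce summand of $a_{ij}Ra_{ij}$ can be massaged into the annihilating form dictated by the current hypothesis. No ring-theoretic input beyond semi-primeness, condition \eqref{eqB}, and the Peirce arithmetic should be needed, and once \eqref{eqA} is in force the conclusion follows immediately from Theorem \ref{thm3.1}.
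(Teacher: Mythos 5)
Your proposal is correct, and the reduction goes through exactly as you sketch: writing $a_{ij}\,r\,a_{ij}=\sum_{p,q}a_{ij}(e_p r e_q)a_{ij}$, Peirce orthogonality forces $p=j$ and $q=i$, so in each of the six non-\eqref{eqB} cases exactly one summand $a_{ij}R_{ji}a_{ij}$ survives; when the hypothesis is $a_{ij}R_{ji}=0$ it vanishes directly, and when the hypothesis is $a_{ij}R_{jj}=0$ it vanishes after re-association, since $x_{ji}a_{ij}\in R_{jj}$ gives $a_{ij}(x_{ji}a_{ij})=0$. The two triples this mechanism cannot reach, $(i,j,k)=(1,1,2)$ and $(2,2,1)$, are precisely \eqref{eqB}, so semi-primeness finishes every remaining case. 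However, your route is genuinely different from the paper's: the paper disposes of the corollary in one line by citing Lemma $1.5$ of \cite{jing2012additivity} for the implication \eqref{eqB} $\Rightarrow$ \eqref{eqA} in $2$-torsion free semi-prime rings, and then invokes Theorem \ref{thm3.1}, whereas you reprove that lemma (at least the part needed here) from scratch. What your version buys is self-containedness and a sharper observation: your argument never uses $2$-torsion freeness, so that hypothesis appears to be inherited from the cited lemma rather than needed for this corollary; what the citation buys the paper is brevity. One cosmetic imprecision in your write-up: you say ``one or two Peirce pieces survive,'' but in fact exactly one survives in each of the six cases.
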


\begin{proof}
	Since $R$ satisfies \eqref{eqB}, by Lemma $1.5$ in \cite{jing2012additivity}, $R$ also satisfies the condition \eqref{eqA}. Hence, we have the desired result by Theorem \ref{thm3.1}.
\end{proof}

\section{Multiplicative Generalized Skew Derivation}
\begin{theorem}
	\label{thm4.1}
	If $g$ is a multiplicative generalized skew derivation on $R$, then $g$ is additive. Moreover, $g$ is a generalized skew derivation on $R$.
\end{theorem}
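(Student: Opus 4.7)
The plan is to deduce additivity of $g$ directly from Theorems \ref{thm2.1} and \ref{thm3.1}, bypassing any Peirce-decomposition lemma chain of the kind used in Sections 2 and 3. The key observation is that substituting $a=1$ into the defining identity $g(ab) = g(a)b + \varphi(a)D(b)$, together with $\varphi(1)=1$ (Lemma \ref{lem2.3}, applicable since $\varphi$ is a multiplicative automorphism), yields
\begin{align*}
g(b) = g(1)\,b + D(b), \qquad b \in R.
\end{align*}
In other words, $g$ decomposes pointwise as left multiplication by the fixed element $g(1)$ plus the multiplicative skew derivation $D$.

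Since left multiplication by any fixed ring element is additive by distributivity, and $D$ is additive by Theorem \ref{thm3.1}, the right-hand side is additive in $b$. Hence $g$ is a sum of two additive maps and is therefore additive. Once additivity is in hand, the defining multiplicative identity holds by assumption, $\varphi$ is an automorphism by Theorem \ref{thm2.1}, and $D$ is a skew derivation by Theorem \ref{thm3.1}, so $g$ is a genuine generalized skew derivation, which gives the ``moreover'' clause.

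The only step requiring care---the ``obstacle,'' minor as it is---is that the $a=1$ substitution uses only the defining identity and no prior additivity hypothesis on $g$; this is legitimate because the multiplicative relation is assumed to hold for all $a,b\in R$. Should one wish to mirror the lengthy structure of Theorem \ref{thm3.1} instead, one could prove for $g$ direct analogues of Lemmas \ref{lem3.3}--\ref{lem3.7} via condition \eqref{eqA}; the passages go through with only bookkeeping changes, since in each two-way computation of $g(xy)$ the $\varphi$ and $D$ contributions cancel cleanly (both maps are already known to be additive), leaving $[g(\text{sum}) - \sum g(\cdot)]\,x_{jk} = 0$ exactly as in the $D$ case. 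However, the shortcut above makes this long chain unnecessary.
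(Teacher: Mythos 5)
Your proof is correct, and it takes a genuinely different and far shorter route than the paper's. The paper proves Theorem \ref{thm4.1} by rebuilding the whole Peirce-decomposition lemma chain for $g$ (Lemmas \ref{lem4.2}--\ref{lem4.9}), invoking condition \eqref{eqA} at each stage in exact parallel with Section 3. You instead exploit the standing hypothesis that $R$ is unital: since $\varphi(1)=1$ by Lemma \ref{lem2.3} (which needs only bijectivity and multiplicativity of $\varphi$), setting $a=1$ in the defining identity gives the pointwise decomposition
\begin{align*}
g(b)=g(1)\,b+D(b), \qquad b\in R,
\end{align*}
so $g$ is the sum of left multiplication by $g(1)$ (additive by distributivity) and $D$ (additive by Theorem \ref{thm3.1}), and additivity of $g$ is immediate; the ``moreover'' clause then follows from Theorems \ref{thm2.1} and \ref{thm3.1} exactly as you say. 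Your substitution is legitimate, since the identity $g(ab)=g(a)b+\varphi(a)D(b)$ is assumed for all $a,b\in R$ with no prior additivity of $g$; note you do not even need additivity of $\varphi$, only $\varphi(1)=1$. Strikingly, the paper itself computes $g(e_i)=g(1)e_i+\varphi(1)D(e_i)$ inside the proof of Lemma \ref{lem4.4} --- precisely your identity at $b=e_i$ --- without extracting the global consequence. What your route buys is the observation that, over a unital ring, the generalized case adds nothing once the skew case is settled: condition \eqref{eqA} enters only through Theorems \ref{thm2.1} and \ref{thm3.1}, and all of Section 4's lemma machinery collapses. One might defend the longer argument as a template for settings without an identity element, but that defense does not apply here: the paper assumes $1\in R$ throughout and its own Lemmas \ref{lem4.3} and \ref{lem4.4} already depend on $1$, so under the paper's hypotheses your shortcut sacrifices nothing. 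Your closing remark is also accurate: if one did mirror Lemmas \ref{lem3.3}--\ref{lem3.7} for $g$, the $\varphi$ and $D$ terms cancel on both sides of each computation (both maps being additive by that point), leaving the same \eqref{eqA}-type conclusions --- which is exactly what the paper's Section 4 does.
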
 	

Let $\varphi$ and $D$ be the associated multiplicative automorphism and associated multiplicative skew derivation on $R$, respectively. Hence, by Theorem \ref{thm2.1} and \ref{thm3.1}, $\varphi$ and $D$ are additive. Before proving Theorem \ref{thm4.1}, we have several lemmas.

\begin{lemma}
	\label{lem4.2}
	\begin{align*}
		g(0)=0.
	\end{align*}
\end{lemma}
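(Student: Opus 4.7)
The plan is to mimic the strategy used in Lemma \ref{lem2.2} and Lemma \ref{lem3.2}: substitute $a = b = 0$ into the defining identity of a multiplicative generalized skew derivation, namely
\begin{equation*}
g(ab) = g(a)b + \varphi(a) D(b),
\end{equation*}
and exploit the fact that the two auxiliary maps $\varphi$ and $D$ already vanish at $0$.

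Concretely, I would compute $g(0) = g(0 \cdot 0) = g(0)\cdot 0 + \varphi(0)D(0)$. The first summand is trivially $0$ because of multiplication by $0$ in $R$. For the second summand, I appeal to Lemma \ref{lem2.2}, which gives $\varphi(0) = 0$, so that $\varphi(0) D(0) = 0 \cdot D(0) = 0$. Alternatively one could invoke Lemma \ref{lem3.2} for $D(0)=0$; either fact is enough to kill the cross term. Combining the two yields $g(0) = 0$.

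There is no real obstacle here: the identity is triggered by a single substitution and uses only the already-established vanishing-at-zero statements for $\varphi$ and $D$. The lemma serves only as a bookkeeping step, to be reused in the subsequent lemmas of this section in the same way Lemma \ref{lem3.2} is used throughout Section~3.
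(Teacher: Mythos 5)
Your proof is correct and is essentially identical to the paper's: the paper likewise computes $g(0)=g(0\cdot 0)=g(0)0+\varphi(0)D(0)=0$, citing Lemmas \ref{lem2.2} and \ref{lem3.2}. Your observation that either $\varphi(0)=0$ or $D(0)=0$ alone suffices to kill the cross term is a minor (accurate) refinement, not a different route.
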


\begin{proof}
	\begin{align*}
		g(0)=g(0\cdot 0)=g(0)0+\varphi(0)D(0)=0~(\text{By Lemma \ref{lem2.2} and \ref{lem3.2}}).
	\end{align*}
\end{proof}	

\begin{lemma}
	\label{lem4.3}
	\begin{align*}
		D(1)=0.
	\end{align*}
\end{lemma}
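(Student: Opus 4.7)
The plan is to compute $D(1)$ directly from the multiplicative skew derivation identity by writing $1 = 1 \cdot 1$. Applying the defining relation $D(ab) = D(a)b + \varphi(a)D(b)$ with $a = b = 1$ gives
\begin{align*}
D(1) = D(1 \cdot 1) = D(1) \cdot 1 + \varphi(1) D(1) = D(1) + \varphi(1) D(1).
\end{align*}

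Next I would invoke Lemma \ref{lem2.3}, which established $\varphi(1) = 1$ (and which is available here because $\varphi$ is the associated multiplicative automorphism and Theorem \ref{thm2.1} has already been proved in the previous section). Substituting $\varphi(1) = 1$ yields $D(1) = D(1) + D(1)$, and subtracting $D(1)$ from both sides in the abelian group $(R,+)$ gives $D(1) = 0$, as required.

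There is essentially no obstacle here: the argument is a one-line computation, and unlike many identities involving $2$ it does not require any torsion-freeness hypothesis because the equation $D(1) = D(1) + D(1)$ already forces $D(1) = 0$ by simple cancellation in the additive group of $R$. The only ingredient beyond the defining identity of the multiplicative skew derivation is $\varphi(1) = 1$, already secured by Lemma \ref{lem2.3}.
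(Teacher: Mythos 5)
Your proof is correct, but it takes a different route from the paper's. You apply the skew derivation identity of $D$ itself to $1 = 1\cdot 1$, getting $D(1) = D(1) + \varphi(1)D(1) = D(1) + D(1)$ and cancelling; the paper instead applies the \emph{generalized} identity of $g$ to $1 = 1\cdot 1$, getting $g(1) = g(1)\cdot 1 + \varphi(1)D(1) = g(1) + D(1)$ and cancelling $g(1)$. Both arguments rest on $\varphi(1)=1$ (Lemma \ref{lem2.3}) plus cancellation in $(R,+)$, and both are one-liners. Your version is legitimate here because the paper explicitly takes the associated $D$ to be a multiplicative skew derivation, so it satisfies $D(ab)=D(a)b+\varphi(a)D(b)$ in its own right; in fact your argument never mentions $g$ at all, so it proves $D(1)=0$ for every multiplicative skew derivation and could already have been stated in Section 3. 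The paper's route via $g$ has the complementary advantage that it would still work under the weaker hypothesis that $D$ is merely some map coupled to $g$ through the generalized identity, since it extracts $D(1)=0$ purely from that relation. You are also right that no $2$-torsion-freeness is needed: the relation $D(1)=D(1)+D(1)$ (or $g(1)=g(1)+D(1)$) forces $D(1)=0$ by plain additive cancellation.
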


\begin{proof}
	\begin{align*}
		&g(1)=g(1\cdot 1)=g(1)1+\varphi(1)D(1)=g(1)+D(1)\\
		& \implies D(1)=0.
	\end{align*}
\end{proof}

\begin{lemma}
	\label{lem4.4}
	\begin{align*}
		g(e_1+e_2)=g(e_1)+g(e_2).
	\end{align*}
\end{lemma}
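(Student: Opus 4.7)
The plan is to mimic the strategy of Lemmas \ref{lem2.4} and \ref{lem3.3}: set $z := g(e_1+e_2) - g(e_1) - g(e_2)$ and show that $z\cdot x_{jk} = 0$ for $x_{jk}$ ranging through $R_{12}$ and $R_{22}$, then invoke condition \eqref{eqA} corner-by-corner to conclude $z = 0$. The key subtlety is that $g$ is not yet known to be additive nor assumed surjective, so the splitting $g(e_1+e_2) = g(e_1)+g(e_2)$ cannot be asserted directly; it must be extracted from the defining product rule.

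Since $e_1+e_2 = 1$ and $\varphi(1) = 1$ (Lemma \ref{lem2.3}), the multiplicative generalized skew derivation property applied to $1\cdot x$, $e_1\cdot x$ and $e_2\cdot x$ yields, for every $x\in R$,
\begin{align*}
g(1)\,x &= g(x) - D(x),\\
g(e_1)\,x &= g(e_1 x) - \varphi(e_1)\,D(x),\\
g(e_2)\,x &= g(e_2 x) - \varphi(e_2)\,D(x).
\end{align*}
Subtracting the last two from the first and using $\varphi(e_1) + \varphi(e_2) = 1$ from Lemma \ref{lem2.4}, the three $D(x)$ contributions cancel, giving the clean identity
\[
z\cdot x = g(x) - g(e_1 x) - g(e_2 x).
\]

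Now specialise $x = x_{22}\in R_{22}$: then $e_1 x_{22} = 0$ and $e_2 x_{22} = x_{22}$, so the right hand side collapses to $g(x_{22}) - g(0) - g(x_{22}) = 0$ by Lemma \ref{lem4.2}. Hence $z\cdot x_{22} = 0$ for all $x_{22}\in R_{22}$; decomposing $z$ into Peirce components and applying \eqref{eqA} forces $[z]_{12} = [z]_{22} = 0$. The parallel choice $x = x_{12}\in R_{12}$ gives $e_1 x_{12} = x_{12}$, $e_2 x_{12} = 0$, so the right hand side vanishes again and we obtain $[z]_{11} = [z]_{21} = 0$. Combining the four components yields $z = 0$, which is the claim.

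I do not anticipate any genuine obstacle. The only delicate point is recognising that multiplying by $x_{22}$ and $x_{12}$ and then invoking \eqref{eqA} plays exactly the role that additivity of $g$ would play in a naive derivation, and that the identity $\varphi(e_1) + \varphi(e_2) = 1$ is what makes the $D(x)$ terms disappear so that $z\cdot x$ can be read off cleanly.
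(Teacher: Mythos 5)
Your proof is correct, but it takes a genuinely different route from the paper's. The paper's proof is a three-line computation: it expands $g(e_i)=g(1)e_i+\varphi(1)D(e_i)$ for $i=1,2$, adds, and collapses $D(e_1)+D(e_2)$ to $D(e_1+e_2)=D(1)=0$ using the additivity of $D$ (already secured by Theorem \ref{thm3.1}) together with Lemma \ref{lem4.3}; condition \eqref{eqA} is never invoked inside that proof. You instead avoid $D$'s additivity and $D(1)=0$ altogether: your identity $z\cdot x = g(x)-g(e_1x)-g(e_2x)$, obtained by cancelling the $D(x)$ terms via $\varphi(e_1)+\varphi(e_2)=1$ (Lemma \ref{lem2.4}) and $\varphi(1)=1$ (Lemma \ref{lem2.3}), reduces everything to $g(0)=0$ (Lemma \ref{lem4.2}) and the annihilation argument through \eqref{eqA}. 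Both are sound; each step of yours checks out, including the tacit use of the directness of the Peirce decomposition when splitting $z x_{22}=z_{12}x_{22}+z_{22}x_{22}$ into components before applying \eqref{eqA}, which is the same reasoning pattern the paper uses in Section 2. What the paper's route buys is brevity, at the cost of leaning on the full weight of Theorem \ref{thm3.1}; what yours buys is logical economy — it shows this lemma needs only the Section 2 facts about $\varphi$, the multiplicative structure of $g$, and \eqref{eqA}, so it would survive even in a setting where the additivity of $D$ had not yet been established. One small remark: your stated model is Lemma \ref{lem2.4}, but that proof uses surjectivity of $\varphi$ to realize the sum as $\varphi(z)$, whereas your $z$ is simply a difference of ring elements; your argument is closer in spirit to the proofs of Lemma \ref{lem3.3} and its relatives, which is exactly the right template here since $g$ is not assumed surjective.
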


\begin{proof}
	\begin{align*}
		& g(e_1)=g(1\cdot e_1)=g(1)e_1+\varphi(1)D(e_1)\\
		~\&~&g(e_2)=g(1\cdot e_2)=g(1)e_2+\varphi(1)D(e_2).
	\end{align*}
	Adding these,
	\begin{align*}
		g(e_1)+g(e_2)&=g(1)(e_1+e_2)+\varphi(1)(D(e_1)+D(e_2))\\
		&=g(1)1+\varphi(1)D(e_1+e_2)\\
		&=g(1)+D(1)=g(1)~(\text{By Lemma \ref{lem4.3}})\\
		&=g(e_1+e_2).
	\end{align*}
\end{proof}

\begin{lemma}
	\label{lem4.5}
	Let $p_{ij} \in R_{ij}$ and $q_{ij} \in R_{ij}$. Then
	$$
	\begin{aligned}
		(i) & ~ g(p_{11}+q_{12} )=g (p_{11} )+g (q_{12} ),\\
		(ii)&~  g (p_{22}+q_{21} )=g (p_{22} )+g (q_{21} ),\\
		(iii) & ~  g (p_{11}+q_{21} )=g (p_{11} )+g (q_{21} ), \\
		(iv)&~  g (p_{22}+q_{12} )=g (p_{22} )+g (q_{12} ) .
	\end{aligned}
	$$
\end{lemma}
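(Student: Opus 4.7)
The cleanest approach is to observe that Lemma \ref{lem4.5} (indeed Theorem \ref{thm4.1} in its entirety) is a one-line consequence of the identity
\[
g(x) \;=\; g(1)x + D(x).
\]
This follows from the defining equation $g(ab) = g(a)b + \varphi(a)D(b)$ with $a = 1$, together with $\varphi(1) = 1$ (Lemma \ref{lem2.3}). Since $x \mapsto g(1)x$ is manifestly additive and $D$ is additive by Theorem \ref{thm3.1}, $g$ is additive on all of $R$, and each of (i)--(iv) is immediate.

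If instead one mirrors the step-by-step template of Lemma \ref{lem3.3}, the plan is to set $z := g(x+y) - g(x) - g(y)$ for the relevant pair and to show $zw = 0$ for two choices of $w$ whose entries, via \eqref{eqA}, exhaust all four Peirce components. For (i), take $w = x_{22} \in R_{22}$, for which $(a_{11}+b_{12})x_{22} = b_{12}x_{22}$. Computing $g((a_{11}+b_{12})x_{22})$ by the defining identity in one direction and, in the other, as $g(a_{11}x_{22}) + g(b_{12}x_{22})$ (legitimate because $g(a_{11}x_{22}) = g(0) = 0$), then cancelling the $D(x_{22})$ contributions by the additivity of $\varphi$ from Theorem \ref{thm2.1}, yields $zx_{22} = 0$ and kills the $(1,2)$- and $(2,2)$-components. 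Repeating with $w = x_{12} \in R_{12}$ (using $(a_{11}+b_{12})x_{12} = a_{11}x_{12}$) finishes the remaining two. Part (ii) is the mirror image with $w \in R_{11}$ and $w \in R_{21}$.

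The main obstacle lies in (iii) and (iv): the elegant $e_1, e_2$ manoeuvre used in Lemma \ref{lem3.3}(iii) now collapses to the tautology $\varphi(e_i)\bigl[D(a_{11}+b_{21}) - D(a_{11}) - D(b_{21})\bigr] = 0$, because $D$ is already known to be additive (Theorem \ref{thm3.1}). A direct workaround succeeds instead: both $(a_{11}+b_{21})x_{22}$ and $(a_{11}+b_{21})x_{21}$ vanish, so the left-hand side of the two-way computation is $g(0) = 0$; comparing the Leibniz-type expansion of $0 = g((a_{11}+b_{21})w)$ with $g(a_{11}w) + g(b_{21}w) = 0$ (each summand vanishing separately), and using the additivity of $\varphi$, yields $[g(a_{11}+b_{21}) - g(a_{11}) - g(b_{21})]w = 0$. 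Choosing $w = x_{22}$ and then $w = x_{21}$ exhausts all four Peirce components via \eqref{eqA}, proving (iii). Part (iv) is the symmetric argument with $w = x_{11}$ and $w = x_{12}$, both of which annihilate $a_{22} + b_{12}$.
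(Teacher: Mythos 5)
Your primary argument is correct and takes a genuinely different, much shorter route than the paper. Since $R$ is unital throughout and $\varphi(1)=1$ by Lemma \ref{lem2.3}, putting $a=1$ in $g(ab)=g(a)b+\varphi(a)D(b)$ gives $g(x)=g(1)x+D(x)$ for all $x\in R$, so additivity of $g$ --- hence all four parts of the lemma, and indeed Theorem \ref{thm4.1} with Lemmas \ref{lem4.2}--\ref{lem4.9} --- follows at once from the additivity of $D$ (Theorem \ref{thm3.1}), which the paper itself invokes before the Section 4 lemmas. The paper instead argues componentwise: (i) and (ii) exactly by the two-sided computation you describe, and (iii), (iv) by a left-multiplication device you seem to have missed: from $a_{11}=e_1(a_{11}+b_{21})$ and $b_{21}=e_2(a_{11}+b_{21})$ one gets $g(a_{11})=g(e_1)(a_{11}+b_{21})+\varphi(e_1)D(a_{11}+b_{21})$ and its $e_2$-analogue; adding and using $g(e_1)+g(e_2)=g(1)$ (Lemma \ref{lem4.4}) and $\varphi(e_1)+\varphi(e_2)=1$ (Lemma \ref{lem2.4}) yields $g(a_{11})+g(b_{21})=g(1)(a_{11}+b_{21})+\varphi(1)D(a_{11}+b_{21})=g(a_{11}+b_{21})$. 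Note this keeps $D(a_{11}+b_{21})$ intact rather than expanding it, so it does not collapse into the tautology you describe; your observation is correct only for a naive two-way comparison of $g(e_i(a_{11}+b_{21}))$ in the style of Lemma \ref{lem3.3}(iii). What your route buys is brevity and the elimination of condition \eqref{eqA} from Section 4 entirely (it is of course still needed for Theorems \ref{thm2.1} and \ref{thm3.1}); the paper's componentwise route would matter only in a non-unital setting, which the paper does not consider anyway.

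Your fallback argument for (iii) and (iv), however, contains a genuine gap. Write $z=g(a_{11}+b_{21})-g(a_{11})-g(b_{21})$. Since $z_{ij}w$ can be nonzero only when the middle indices match, the probe $w=x_{22}\in R_{22}$ gives $z_{12}x_{22}+z_{22}x_{22}=0$, and \eqref{eqA} then kills only $z_{12}$ and $z_{22}$; the probe $w=x_{21}\in R_{21}$ gives $z_{12}x_{21}+z_{22}x_{21}=0$ and via \eqref{eqA} kills the \emph{same} two components again, because both of your probes lie in $R_{2k}$. The components $z_{11}$ and $z_{21}$ are never constrained, so your two choices do not ``exhaust all four Peirce components.'' To reach $z_{11},z_{21}$ you would need $w\in R_{11}$ or $R_{12}$, but then $(a_{11}+b_{21})w=a_{11}w+b_{21}w$ no longer vanishes, and splitting $g((a_{11}+b_{21})w)$ requires additivity of $g$ on $R_{11}+R_{21}$ or on $R_{12}+R_{22}$ --- exactly (iii) or (iv), i.e.\ the argument becomes circular. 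The structural reason is that in (iii), (iv) the two summands share the same \emph{right} Peirce index, so any right probe annihilates both or neither; in (i), (ii) the right indices differ, which is why each probe there isolates one summand and the direct method works. Your (iv) has the mirror defect: $w\in R_{11}$ and $w\in R_{12}$ both kill only $z_{11},z_{21}$, leaving $z_{12},z_{22}$ untouched. This is precisely why the paper switches to the $e_1,e_2$ left-multiplication summation for (iii) and (iv). Since your primary argument already proves the lemma, nothing is ultimately lost, but the fallback as written does not stand on its own.
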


\begin{proof}
	Let $x_{22} \in R_{22}$. Then
	$$
	\begin{aligned}
		&g (p_{11}+q_{12} ) x_{22}+\varphi (p_{11}+q_{12} ) D (x_{22} ) \\
		&=g [ (p_{11}+q_{12} ) x_{22} ]\\
		&=g (q_{12} x_{22} ) \\
		&=g (p_{11} x_{22} )+g (q_{12} x_{22} ) ~(\text{By Lemma \ref{lem4.2}})\\
		&=g (p_{11} ) x_{22}+\varphi (p_{11} ) D (x_{22} )+g (q_{12} ) x_{22}+\varphi (q_{12} ) D (x_{22} ).
	\end{aligned}
	$$
	Comparing both sides,
	$$
	[g (p_{11}+q_{12} )-g (p_{11} )-g (q_{12} ) ] x_{22}=0~(\text{By Theorem \ref{thm2.1}}).
	$$
	Using the assumption \eqref{eqA} on $R$, we get
	\begin{equation}
		\label{eq4.1}
		\begin{aligned}
			& { [g (p_{11}+q_{12} )-g (p_{11} )-g (q_{12} ) ]_{12}=0 } \\
			\& & ~{ [g (p_{11}+q_{12} )-g (p_{11} )-g (q_{12} ) ]_{22}=0 . }
		\end{aligned}
	\end{equation}
	Similarly, for $x_{12} \in R_{12}$, we have
	$$
	\begin{aligned}
		&g (p_{11}+q_{12} ) x_{12}+\varphi (p_{11}+q_{12} ) D (x_{12} )\\
		&=g [ (p_{11}+q_{12} ) x_{12} ]\\
		& =g (p_{11} x_{12} )\\
		&=g (p_{11} x_{12} )+g (q_{12} x_{12} )\\
		&=g (p_{11} ) x_{12}+\varphi (p_{11} ) D (x_{12} )+g (q_{12} ) x_{12}+\varphi (q_{12} ) D (x_{12} ).
	\end{aligned}
	$$
	This yields that,
	\begin{equation}
		\label{eq4.2}
		\begin{aligned}
			& [g (p_{11}+q_{12} )-g (p_{11} )-g (q_{12} ) ] x_{12}=0 \\
			& \implies [g (p_{11}+q_{12} )-g (p_{11} )-g (q_{12} ) ]_{11}=0 \\
			&\&~ [g (p_{11}+q_{12} )-g (p_{11} )-g (q_{12} ) ]_{21}=0.
		\end{aligned}
	\end{equation}
	Hence, by \eqref{eq4.1} and \eqref{eq4.2},
	\begin{align*}
		g (p_{11}+q_{12} )=g (p_{11} )+g (q_{12} ).
	\end{align*}
	Similarly, we can prove that
	\begin{align*}
		g (p_{22}+q_{21} )=g (p_{22} )+g (q_{21} ).
	\end{align*}
	Now,
	\begin{align*}
		&g(p_{11})=g(e_1(p_{11}+q_{21}))=g(e_1)(p_{11}+q_{21})+\varphi(e_1)D(p_{11}+q_{21})\\
		\& ~& g(q_{21})=g(e_2(p_{11}+q_{21}))=g(e_2)(p_{11}+q_{21})+\varphi(e_2)D(p_{11}+q_{21}).
	\end{align*}
	Adding these,
	\begin{align*}
		g(p_{11})+g(q_{21})&=(g(e_1)+g(e_2))(p_{11}+q_{21})+(\varphi(e_1)+\varphi(e_2))D(p_{11}+q_{21})\\
		&=g(1)(p_{11}+q_{21})+\varphi(1)D(p_{11}+q_{21})~\text{(By Lemma \ref{lem4.4})}\\
		&=g(1(p_{11}+q_{21}))=g(p_{11}+q_{21}).
	\end{align*}		
	Similarly, we can prove that
	\begin{align*}
		g (p_{22}+q_{12} )=g (p_{22} )+g (q_{12} ) .
	\end{align*}
\end{proof}

\begin{lemma}
	\label{lem4.6}
	Let $p_{ij}, q_{ij}, c_{ij} \in R_{ij}$. Then
	$$
	\begin{aligned}
		(i) ~ g (p_{12}+q_{12} c_{22} )=g (p_{12} )+g (q_{12} c_{22} ),\\
		(ii) ~ g (p_{21}+q_{22} c_{21} )=g (p_{21} )+g (q_{22} c_{21} ).
	\end{aligned}
	$$
\end{lemma}

\begin{proof}
	Note that,
	\begin{align*}
		p_{12}+q_{12} c_{22}= (e_1+q_{12} ) (p_{12}+c_{22} ).	
	\end{align*}		
	Therefore,
	$$
	\begin{aligned}
		g(p_{12}+q_{12} c_{22})
		=&g( (e_1+q_{12} ) (p_{12}+c_{22} ) ) \\
		=& g (e_1+q_{12} ) (p_{12}+c_{22} )+\varphi (e_1+q_{12} ) D (p_{12}+c_{22} ) \\
		=& (g (e_1) + g(q_{12} )) (p_{12}+c_{22} )+ (\varphi (e_1 )+\varphi (q_{12} )) (D (p_{12} )+D (c_{22} ) ) \\
		&\text{(By Lemma \ref{lem4.5})}\\
		=& g (e_1 ) p_{12}+\varphi (e_1 ) D (p_{12} )+g (e_1 ) c_{22}+\varphi (e_1 ) D (c_{22} ) \\
		& +g (q_{12} ) p_{12}+\varphi (q_{12} ) D (p_{12} )+g (q_{12} ) c_{22}+\varphi (q_{12} ) D (c_{22} ) \\
		=&  g(e_1 p_{12}) +g(e_1 c_{22})+ g (q_{12} p_{12} )+g (q_{12} c_{22} ) \\
		=& g (p_{12}) +g (q_{12} c_{22} )~\text{(By Lemma \ref{lem4.2})}.
	\end{aligned}
	$$
	Similarly, we can prove that
	\begin{align*}
		g (p_{21}+q_{22} c_{21} )=g (p_{21} )+g (q_{22} c_{21} ).
	\end{align*}
\end{proof}

\begin{lemma}
	\label{lem4.7}
	Let $p_{ij}, q_{ij} \in R_{ij}$. Then
	$$
	\begin{aligned}
		(i)& ~ g (p_{12}+q_{12} )=g (p_{12} )+g (q_{12} ),\\
		(ii)&~ g (p_{21}+q_{21} )=g (p_{21} )+g (q_{21} ).
	\end{aligned}
	$$
\end{lemma}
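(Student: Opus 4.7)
The plan is to mirror the proof of Lemma \ref{lem3.5} almost verbatim, with $g$ playing the role of $D$ and Lemma \ref{lem4.6} replacing Lemma \ref{lem3.4}. The generalized skew derivation identity $g(ab) = g(a)b + \varphi(a)D(b)$ still has $\varphi$ in the second slot and $D$ acting on the right factor, so once we know $\varphi$ is additive (Theorem \ref{thm2.1}), the cancellations that drove Lemma \ref{lem3.5} go through unchanged.

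For part $(i)$, I would fix $x_{22} \in R_{22}$ and evaluate $g\bigl[(a_{12}+b_{12})x_{22}\bigr]$ in two ways. The direct expansion via the generalized skew derivation identity yields $g(a_{12}+b_{12})x_{22} + \varphi(a_{12}+b_{12})D(x_{22})$. On the other hand, $(a_{12}+b_{12})x_{22} = a_{12}x_{22} + b_{12}x_{22}$, where $a_{12}x_{22} \in R_{12}$ and $b_{12}x_{22}$ is literally the product $b_{12}\cdot x_{22}$; hence Lemma \ref{lem4.6}(i) applies and splits the sum as $g(a_{12}x_{22}) + g(b_{12}x_{22})$, each of which further expands via the generalized skew derivation identity. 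Subtracting and using additivity of $\varphi$ to absorb the $\varphi(\cdot)D(x_{22})$ contributions, I obtain
\[
\bigl[g(a_{12}+b_{12}) - g(a_{12}) - g(b_{12})\bigr]\,x_{22} = 0,
\]
and condition \eqref{eqA} forces the $R_{12}$ and $R_{22}$ components of the bracketed element to vanish.

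Next, I would fix $x_{12} \in R_{12}$ and exploit $R_{12}R_{12} = 0$: both $(a_{12}+b_{12})x_{12}$ and each of $a_{12}x_{12}, b_{12}x_{12}$ are zero, so by Lemma \ref{lem4.2} the corresponding $g$-values are zero. Expanding $g\bigl[(a_{12}+b_{12})x_{12}\bigr]$ and $g(a_{12}x_{12})+g(b_{12}x_{12})$ via the generalized skew derivation identity and subtracting gives
\[
\bigl[g(a_{12}+b_{12}) - g(a_{12}) - g(b_{12})\bigr]\,x_{12} = 0,
\]
and \eqref{eqA} kills the $R_{11}$ and $R_{21}$ components. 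Combining with the previous step proves $(i)$, and $(ii)$ follows by the symmetric argument with the roles of $e_1$ and $e_2$ swapped (using Lemma \ref{lem4.6}(ii) in place of Lemma \ref{lem4.6}(i)).

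The only delicate point is the cancellation of the $\varphi(\cdot)D(x)$ terms across the two expansions, which depends crucially on $\varphi$ being additive; since that is already in hand from Theorem \ref{thm2.1}, no real obstacle remains. This lemma is thus a direct adaptation of Lemma \ref{lem3.5} to the generalized skew setting, and I expect the write-up to be essentially cosmetic.
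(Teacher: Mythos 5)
Your proposal is correct and matches the paper's own proof essentially verbatim: the paper likewise evaluates $g\bigl[(a_{12}+b_{12})x_{22}\bigr]$ in two ways, invoking Lemma \ref{lem4.6} to split $g(a_{12}x_{22}+b_{12}x_{22})$, and then repeats the computation with $x_{12}\in R_{12}$ (where the product vanishes and Lemma \ref{lem4.2} gives $g(0)=0$), using condition \eqref{eqA} twice to kill all four components, with $(ii)$ by the symmetric argument. No gaps.
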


\begin{proof}
	Let  $x_{22} \in R_{22}$. Then
	$$
	\begin{aligned}
		& g (p_{12}+q_{12} ) x_{22}+\varphi (p_{12}+q_{12} ) D (x_{22} ) \\
		&=g [ (p_{12}+q_{12} ) x_{22} ]\\
		&= g (p_{12} x_{22}+q_{12} x_{22} ) \\
		&= g (p_{12} x_{22} )+g (q_{12} x_{22} ) ~(\text{By Lemma \ref{lem4.6}})\\
		&=g (p_{12} ) x_{22}+\varphi (p_{12} ) D (x_{22} )+g (q_{12} ) x_{22}+\varphi (q_{12} ) D (x_{22} ).
	\end{aligned}
	$$
	Comparing both sides, we get
	\begin{equation}
		\label{eq4.3}
		\begin{aligned}
			& { [g (p_{12}+q_{12} )-g (p_{12} )-g (q_{12} ) ] x_{22}=0 } \\
			\implies &  { [g (p_{12}+q_{12} )-g (p_{12} )-g (q_{12} ) ]_{12}=0 } \\
			& \& ~ [g (p_{12}+q_{12} )-g (p_{12} )-g (q_{12} ) ]_{22}=0 .
		\end{aligned}
	\end{equation}
	Let $x_{12} \in R_{12}$. Then
	$$
	\begin{aligned}
		& g (p_{12}+q_{12} ) x_{12}+\varphi (p_{12}+q_{12} ) D (x_{12} ) \\
		&=g [ (p_{12}+q_{12} ) x_{12} ]\\
		&= g(0)=0 \\
		&=g (p_{12} x_{12} )+ g(q_{12}x_{12} ) \\
		&= g (p_{12} ) x_{12}+\varphi (p_{12} ) D (x_{12} )+g (q_{12} ) x_{12}+\varphi (q_{12} ) D (x_{12} ),
	\end{aligned}
	$$
	which yields,
	\begin{equation}
		\label{eq4.4}
		\begin{aligned}
			& { [g (p_{12}+q_{12} )-g (p_{12} )-g (q_{12} ) ] x_{12}=0 } \\
			\implies & { [g (p_{12}+q_{12} )-g (p_{12} )-g (q_{12} ) ]_{11}=0 } \\
			& \& ~ [g (p_{12}+q_{12} )-g (p_{12} )-g (q_{12} ) ]_{21}=0 .
		\end{aligned}
	\end{equation}
	Hence, by \eqref{eq4.3} and \eqref{eq4.4},
	\begin{align*}
		g (p_{12}+q_{12} ) = g (p_{12} ) + g (q_{12} ).
	\end{align*}	
	Similarly, we can prove that
	\begin{align*}
		g (p_{21}+q_{21} ) = g (p_{21} ) + g (q_{21} ).
	\end{align*}	
\end{proof}

\begin{lemma}
	\label{lem4.8}
	Let $p_{ii}, q_{ii} \in R_{ii}$. Then
	$$
	\begin{aligned}
		& (i)~ g (p_{11}+q_{11} )=g (p_{11} )+g (q_{11} ),\\
		& (ii)~ g (p_{22}+q_{22} )=g (p_{22} )+g (q_{22} ).
	\end{aligned}
	$$
\end{lemma}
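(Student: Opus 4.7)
The plan is to mirror the proof of Lemma \ref{lem3.6} for the skew derivation $D$, replacing $D$ by $g$ on the outer level while still using the fact that $g$ interacts with $\varphi$ and $D$ through the defining identity $g(ab)=g(a)b+\varphi(a)D(b)$. The additivity of $\varphi$ (Theorem \ref{thm2.1}) is already available, and the additive contribution $\varphi(a_{11}+b_{11})D(x) = \varphi(a_{11})D(x) + \varphi(b_{11})D(x)$ will cancel cleanly on both sides of the comparison, leaving only the $g$-terms. The key identity is that when $a_{11}, b_{11} \in R_{11}$ are multiplied by $x_{22} \in R_{22}$ the product vanishes, which lets us identify two expressions for $g[(a_{11}+b_{11})x_{22}]$.

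For part (i), I would first fix $x_{22} \in R_{22}$ and compute $g[(a_{11}+b_{11})x_{22}]$ in two ways: on one hand by the defining identity as $g(a_{11}+b_{11})x_{22}+\varphi(a_{11}+b_{11})D(x_{22})$, and on the other by observing $(a_{11}+b_{11})x_{22}=0$, so this equals $g(0)=0$ (Lemma \ref{lem4.2}), which also equals $g(a_{11}x_{22})+g(b_{11}x_{22})$ and expands via the $g$-identity to $g(a_{11})x_{22}+\varphi(a_{11})D(x_{22})+g(b_{11})x_{22}+\varphi(b_{11})D(x_{22})$. Subtracting and using additivity of $\varphi$ yields $[g(a_{11}+b_{11})-g(a_{11})-g(b_{11})]x_{22}=0$, so by assumption \eqref{eqA} the $(1,2)$ and $(2,2)$ components of $g(a_{11}+b_{11})-g(a_{11})-g(b_{11})$ vanish.

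Next I would take $x_{12}\in R_{12}$. Here $(a_{11}+b_{11})x_{12}=a_{11}x_{12}+b_{11}x_{12}\in R_{12}$, and Lemma \ref{lem4.7} gives $g(a_{11}x_{12}+b_{11}x_{12})=g(a_{11}x_{12})+g(b_{11}x_{12})$. Again expanding both ways through the defining identity and cancelling the $\varphi$-parts via additivity produces $[g(a_{11}+b_{11})-g(a_{11})-g(b_{11})]x_{12}=0$, whence the $(1,1)$ and $(2,1)$ components also vanish by \eqref{eqA}. Combining the four component equalities gives $g(a_{11}+b_{11})=g(a_{11})+g(b_{11})$. Part (ii) follows by the symmetric argument using $x_{11}\in R_{11}$ and $x_{21}\in R_{21}$, invoking Lemma \ref{lem4.7}(ii) at the corresponding step.

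The only delicate point is making sure the cancellation of the $\varphi$-terms is legitimate; this needs additivity of $\varphi$, which is the content of Theorem \ref{thm2.1}, and additivity of $D$ on the $R_{12}$ side (Lemma \ref{lem3.5}) is what feeds Lemma \ref{lem4.7}. No new obstacle beyond this cancellation is expected, since the structure is identical to Lemma \ref{lem3.6}.
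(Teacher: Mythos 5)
Your proposal is correct and follows essentially the same route as the paper's proof: expand $g[(a_{11}+b_{11})x]$ two ways for $x_{22}\in R_{22}$ (where the product vanishes, via Lemma \ref{lem4.2}) and for $x_{12}\in R_{12}$ (where Lemma \ref{lem4.7} splits the image), cancel the $\varphi$-terms by Theorem \ref{thm2.1}, and kill all four components of $g(a_{11}+b_{11})-g(a_{11})-g(b_{11})$ via condition \eqref{eqA}, with the symmetric argument for (ii). One peripheral correction: Lemma \ref{lem4.7} is fed by Lemma \ref{lem4.6} and the additivity of $D$ as a whole (Theorem \ref{thm3.1}), not specifically by Lemma \ref{lem3.5}, but this misattribution of dependencies does not affect your argument.
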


\begin{proof}
	Let $x_{22} \in R_{22}$. Then
	\begin{align*}
		& g (p_{11}+q_{11} ) x_{22}+\varphi (p_{11}+q_{11} ) D (x_{22} )\\
		&=g [ (p_{11}+q_{11} ) x_{22} ]\\
		&=g [ (p_{11}+q_{11} ) x_{22} ]\\
		&=g(0)=0\\
		&=g (p_{11} x_{22} )+g (q_{11} x_{22} )\\
		&=g (p_{11} ) x_{22}+\varphi (p_{11} ) D (x_{22} )+g (q_{11} ) x_{22}+\varphi (q_{11} ) D (x_{22} ).
	\end{align*}
	Comparing both sides,
	\begin{equation}
		\label{eq4.5}
		\begin{aligned}
			& { [g (p_{11}+q_{11} )-g (p_{11} )-g (q_{11}) ] x_{22}=0 } \\
			\implies & { [g (p_{11}+q_{11} )-g (p_{11} )-g (q_{11} ) ]_{12}=0 } \\
			& \& ~ [g (p_{11}+q_{11} )-g (p_{11} )-g (q_{11} ) ]_{22}=0
		\end{aligned}
	\end{equation}
	Similarly, by taking $x_{12} \in R_{12}$,
	\begin{equation}
		\label{eq4.6}
		\begin{aligned}
			& { [g (p_{11}+q_{11} )-g (p_{11} )-g (q_{11} ) ]_{11}=0 } \\
			& \& ~ [g (p_{11}+q_{11} )-g (p_{11} )-g (q_{11} ) ]_{21}=0.
		\end{aligned}
	\end{equation}
	Hence, by \eqref{eq4.5} and \eqref{eq4.6},
	\begin{align*}
		g (p_{11}+q_{11} )=g (p_{11} )+g (q_{11} ).
	\end{align*}		
	Similarly, we can prove that
	\begin{align*}
		g (p_{22}+q_{22} )=g (p_{22} )+g (q_{22} ).
	\end{align*}		
\end{proof}

\begin{lemma}
	\label{lem4.9}
	Let $p_{11} \in R_{11}$, $q_{12} \in R_{12}$, $c_{21} \in R_{21}$ and $d_{22} \in R_{22}$. Then
	$$
	\begin{aligned}
		g (p_{11}+q_{12}+c_{21}+d_{22} )= g (p_{11} )+g (q_{12} ) +g (c_{21} )+g (d_{22} ).
	\end{aligned}
	$$
\end{lemma}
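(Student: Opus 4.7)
The plan is to mimic, essentially verbatim, the proof of Lemma~\ref{lem3.7} (the analogous statement for the skew derivation $D$), replacing $D$ by $g$ and exploiting the identity $g(uv)=g(u)v+\varphi(u)D(v)$. All the structural ingredients we need are already in place: Lemma~\ref{lem4.2} gives $g(0)=0$, Lemma~\ref{lem4.5} handles two-term sums from different Peirce components, and $\varphi$ is additive by Theorem~\ref{thm2.1}.

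First I would set $S:=a_{11}+b_{12}+c_{21}+d_{22}$ and test $g(Sx_{11})$ for an arbitrary $x_{11}\in R_{11}$. On the one hand, applying the multiplicative generalized skew derivation rule directly, together with the additivity of $\varphi$, gives
\[
g(Sx_{11})=g(S)x_{11}+\bigl(\varphi(a_{11})+\varphi(b_{12})+\varphi(c_{21})+\varphi(d_{22})\bigr)D(x_{11}).
\]
On the other hand, since $b_{12}x_{11}=0$ and $d_{22}x_{11}=0$, we have $Sx_{11}=a_{11}x_{11}+c_{21}x_{11}$; applying Lemma~\ref{lem4.5}(iii) to split this sum, then inserting the vanishing terms $g(b_{12}x_{11})=g(0)=0$ and $g(d_{22}x_{11})=g(0)=0$, and finally expanding each $g(\cdot x_{11})$ via the defining relation, yields
\[
g(Sx_{11})=\bigl(g(a_{11})+g(b_{12})+g(c_{21})+g(d_{22})\bigr)x_{11}+\bigl(\varphi(a_{11})+\varphi(b_{12})+\varphi(c_{21})+\varphi(d_{22})\bigr)D(x_{11}).
\]
Comparing the two expressions cancels the $\varphi(\cdot)D(x_{11})$ tail and leaves
\[
\bigl[g(S)-g(a_{11})-g(b_{12})-g(c_{21})-g(d_{22})\bigr]x_{11}=0.
\]
Condition \eqref{eqA} then forces the $(1,1)$ and $(2,1)$ Peirce components of $g(S)-g(a_{11})-g(b_{12})-g(c_{21})-g(d_{22})$ to vanish.

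Next I would run the symmetric argument with an arbitrary $x_{22}\in R_{22}$. This time $a_{11}x_{22}=0$ and $c_{21}x_{22}=0$, so $Sx_{22}=b_{12}x_{22}+d_{22}x_{22}$, which splits by Lemma~\ref{lem4.5}(iv); the same comparison, using \eqref{eqA} again, kills the $(1,2)$ and $(2,2)$ Peirce components of the same difference. Combining the two conclusions gives $g(S)=g(a_{11})+g(b_{12})+g(c_{21})+g(d_{22})$, as required.

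I expect no real obstacle here: the argument is structurally identical to Lemma~\ref{lem3.7}, and the only subtlety is the bookkeeping step where one legitimately inserts the zero terms $g(b_{12}x_{11})$, $g(d_{22}x_{11})$ (respectively $g(a_{11}x_{22})$, $g(c_{21}x_{22})$) in order to reconstitute the full sum $\sum\varphi(\cdot)D(x)$ on the right-hand side so that it matches, via the additivity of $\varphi$, the tail $\varphi(S)D(x)$ coming from the direct expansion.
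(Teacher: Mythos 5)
Your proposal is correct and follows essentially the same route as the paper's own proof: test $g(Sx_{11})$ and $g(Sx_{22})$ against the two expansions, use Lemma~\ref{lem4.2} to insert the vanishing terms, Lemma~\ref{lem4.5} to split the surviving two-term sums, cancel the $\varphi(\cdot)D(x)$ tails via the additivity of $\varphi$, and invoke condition \eqref{eqA} to kill all four Peirce components of the difference. The bookkeeping step you flag as the only subtlety is exactly the step the paper performs as well, so there is nothing to add.
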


\begin{proof} Let $x_{11} \in R_{11}$. Then
	$$
	\begin{aligned}
		& g (p_{11}+q_{12}+c_{21}+d_{22} ) x_{11} +\varphi (p_{11}+q_{12}+c_{21}+d_{22} ) D (x_{11} ) \\
		&=g [ (p_{11}+q_{12}+c_{21}+d_{22} ) x_{11} ]\\
		&=g (p_{11} x_{11}+c_{21} x_{11} ) \\
		&=g (p_{11} x_{11} )+g (q_{12} x_{11} )+g (c_{21} x_{11} )+g (d_{22} x_{11} )~(\text{By Lemma \ref{lem4.2} and \ref{lem4.5}}) \\
		&=g (p_{11} ) x_{11}+\varphi (p_{11} ) D (x_{11} )+g (q_{12} ) x_{11}+\varphi (q_{12} ) D (x_{11} )\\
		& +g (c_{21} ) x_{11}+\varphi (c_{21} ) D (x_{11} )+g (d_{22} ) x_{11}+\varphi (d_{22} ) D (x_{11} ).
	\end{aligned}
	$$
	Comparing both sides, we get
	\begin{equation}
		\label{eq4.7}
		\begin{aligned}
			& { [g (p_{11}+q_{12}+c_{21}+d_{22} )-g (p_{11} )-g (q_{12} )-g (c_{21} )-g (d_{22}) ] x_{11}=0 .} \\
			\implies & { [g (p_{11}+q_{12}+c_{21}+d_{22} )-g (p_{11} )-g (q_{12} )-g (c_{21} )-g (d_{22} ) ]_{11}=0 } \\
			\&~ & { [g (p_{11}+q_{12}+c_{21}+d_{22} )-g (p_{11} )-g (q_{12} )-g (c_{21} )-g (d_{22} ) ]_{21}=0 }. \\
		\end{aligned}
	\end{equation}
	Similarly, by taking $x_{22}\in R_{22}$,
	\begin{equation}
		\label{eq4.8}
		\begin{aligned}
			& { [g (p_{11}+q_{12}+c_{21}+d_{22} )-g (p_{11} )-g (q_{12} )-g (c_{21} )-g (d_{22} ) ]_{12}=0 } \\
			\& ~& { [g (p_{11}+q_{12}+c_{21}+d_{22} )-g (p_{11} )-g (q_{12} )-g (c_{21} )-g (d_{22} ) ]_{22}=0 }. \\
		\end{aligned}
	\end{equation}
	Hence, by \eqref{eq4.7} and \eqref{eq4.8},
	\begin{align*}
		g (p_{11}+q_{12}+c_{21}+d_{22} )= g (p_{11} )+g (q_{12} ) +g (c_{21} )+g (d_{22} ).
	\end{align*}
\end{proof}

\begin{proof}[Proof of Theorem \ref{thm4.1}]
	Let $a, b \in R$. Then
	\begin{align*}
		& a=p_{11}+p_{12}+p_{21}+p_{22},\\
		& b=q_{11}+q_{12}+q_{21}+q_{22},~\text{for some}~p_{ij},~q_{ij}\in R_{ij}.
	\end{align*}
	Now,
	$$
	\begin{aligned}
		&g(a+b)=g (p_{11}+p_{12}+p_{21}+p_{22}+q_{11}+q_{12}+q_{21}+q_{22} )\\
		&=g [ (p_{11}+q_{11} )+ (p_{12}+q_{12} )+ (p_{21}+q_{21} )+ (p_{22}+q_{22} ) ]\\
		&=g (p_{11}+q_{11} )+g (p_{12}+q_{12} )+g (p_{21}+q_{21} )+g (p_{22}+q_{22} )~\text{(By Lemma \ref{lem4.9})}\\
		&=g (p_{11} )+g(b_ {11})+g (p_{12} )+g (q_{12} )+g (p_{21} )+g (q_{21} ) +g (p_{22} )+g (q_{22} )\\
		& ~\text{(By Lemma \ref{lem4.7} and \ref{lem4.8})}\\
		&=g (p_{11}+p_{12}+p_{21}+p_{22} )+g (q_{11}+q_{12}+q_{21}+q_{22} )~\text{(By Lemma \ref{lem4.9})}\\
		&=g(a)+g(b).
	\end{aligned}
	$$
	Hence, $g$ is additive.
\end{proof}

\begin{corollary}
	\label{cor4.10}
	Let $R$ be a $2$-torsion free semi-prime ring with a non-trivial idempotent $e$ and satisfies the condition \eqref{eqB} given in cor \ref{cor3.8}. Then every multiplicative generalized skew derivation $g$ over $R$ is additive. Moreover, $g$ is a generalized skew derivation on $R$.
\end{corollary}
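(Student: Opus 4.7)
The plan is to prove Corollary \ref{cor4.10} by exactly the same two-step reduction that produced Corollary \ref{cor3.8} from Theorem \ref{thm3.1}. I first reduce the stated hypotheses on $R$ to the Peirce-component condition \eqref{eqA} used throughout the paper, and then apply Theorem \ref{thm4.1} verbatim.

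For the reduction, I would invoke Lemma 1.5 of \cite{jing2012additivity}, which asserts that a $2$-torsion free semi-prime ring with a non-trivial idempotent satisfying \eqref{eqB} necessarily satisfies \eqref{eqA}. The off-diagonal instances of \eqref{eqA} follow from \eqref{eqB} almost directly by pre- or post-multiplying by the appropriate $e_k$, so the substantive content of Lemma 1.5 lies in the diagonal case $i=j$: one must convert $a_{ii}x_{ij}=0$ for all $x_{ij}\in R_{ij}$ (which is \eqref{eqB}) into $a_{ii}x_{ii}=0$ for all $x_{ii}\in R_{ii}$ as well, and here the $2$-torsion-freeness and semi-primeness are exploited by squaring an expression of the form $a_{ii} r a_{ii}$ against itself inside $R_{ii}$. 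This is exactly the same citation already used in Corollary \ref{cor3.8}, so I would simply reuse it.

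Once \eqref{eqA} is in hand, Theorem \ref{thm4.1} applies to $R$ and to the given multiplicative generalized skew derivation $g$, together with its associated multiplicative automorphism $\varphi$ and multiplicative skew derivation $D$. Theorems \ref{thm2.1} and \ref{thm3.1} promote $\varphi$ and $D$ to genuine (additive) automorphism and skew derivation, and Theorem \ref{thm4.1} then yields the additivity of $g$. Combined with the defining identity $g(ab)=g(a)b+\varphi(a)D(b)$, additivity of $g$ upgrades it to an honest generalized skew derivation in the classical sense.

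The only potential obstacle would be verifying the cited Lemma 1.5 of \cite{jing2012additivity} ourselves if the reader wishes to avoid chasing the reference, but the argument sketched above (off-diagonal from \eqref{eqB}, diagonal via a semi-prime squaring argument) is short and standard. With that acknowledged, the corollary follows in essentially one line.
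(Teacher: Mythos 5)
Your proposal is correct and follows exactly the paper's own route: invoke Lemma 1.5 of \cite{jing2012additivity} to pass from condition \eqref{eqB} to condition \eqref{eqA}, then apply Theorem \ref{thm4.1}. Your extra sketch of why Lemma 1.5 holds (off-diagonal cases directly, diagonal case via $2$-torsion-freeness and semi-primeness) goes beyond what the paper records but is consistent with it.
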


\begin{proof}
	We have the desired result by Lemma $1.5$ in \cite{jing2012additivity} and Theorem \ref{thm4.1}.
\end{proof}

\section*{Acknowledgement}
The first author is thankful to the University Grants Commission (UGC), Govt. of India, for financial support under UGC Ref. No. 1256 dated 16/12/2019. All authors are thankful to the Indian Institute of Technology Patna for providing research facilities.

\section*{Declarations}
\textbf{Data Availability Statement}: The authors declare that [the/all other] data supporting the findings of this study are available within the article. \\

\textbf{Competing interests}: The authors declare that there is no conflict of interest regarding the publication of this manuscript.\\

\textbf{Use of AI tools Declaration}: The authors declare that they have not used Artificial Intelligence (AI) tools in the creation of this manuscript.\\

\bigskip
\bigskip


{\footnotesize {\bf Sk Aziz, Om Prakash}\; \\ {Department of
		Mathematics}, {Indian Institute of Technology Patna,} {Patna-801106, India.}\\
	{\tt Email: aziz\_2021ma22@iitp.ac.in, om@iitp.ac.in}\\
	
	{\footnotesize {\bf Arindam Ghosh}\; \\ {Department of
			Mathematics}, {Government Polytechnic Kishanganj,} {Kishanganj-855 116, India.}\\
		{\tt Email: arindam.rkmrc@gmail.com}\\

\end{document}